\renewcommand{\a}{\alpha}
\renewcommand{\b}{\beta}
\newcommand{\g}{\gamma}
\renewcommand{\d}{\delta}
\newcommand{\e}{\varepsilon}
\newcommand{\f}{\varphi}
\newcommand{\s}{\sigma}
\newcommand{\Si}{\Sigma}
\renewcommand{\k}{\kappa}
\renewcommand{\l}{\lambda}
\renewcommand{\O}{\Omega}
\newcommand{\cA}{{\mathcal A}}
\newcommand{\cO}{{\mathcal O}}
\newcommand{\cC}{{\mathcal C}}
\newcommand{\cT}{{\mathcal T}}
\newcommand{\cB}{{\mathcal B}}
\newcommand{\cL}{{\mathcal L}}
\newcommand{\cE}{{\mathcal E}}
\newcommand{\cN}{{\mathcal N}}
\newcommand{\cH}{{\mathcal H}}
\newcommand{\cD}{{\mathcal D}}
\newcommand{\cZ}{\mathcal Z}
\newcommand{\cI}{\mathcal I}
\newcommand{\bR}{\mathbb R}
\newcommand{\bZ}{\mathbb Z}
\newcommand{\bE}{\mathbb E}
\newcommand{\bP}{\mathbb P}
\newcommand{\bV}{\mathbb V}
\newcommand{\Ric}{\mathrm{Ric}}
\newcommand{\Diff}{\mathrm{Diff}}
\newcommand{\tT}{\mathsf{T}}
\newcommand{\be}{\begin{equation}}
\newcommand{\ee}{\end{equation}}
\newcommand{\bes}{\begin{equation*}}
\newcommand{\ees}{\end{equation*}}
\newcommand{\tr}{\mathrm{tr}}
\newcommand{\beaa}{\begin{eqnarray*}}
\newcommand{\bea}{\begin{eqnarray}}
\newcommand{\beal}[1]{\begin{eqnarray}\label{#1}}
\newcommand{\bean}{\begin{eqnarray}\nonumber}
\newcommand{\beadl}[1]{\begin{deqarr}\label{#1}}
\newcommand{\eeadl}[1]{\arrlabel{#1}\end{deqarr}}
\newcommand{\eeal}[1]{\label{#1}\end{eqnarray}}
\newcommand{\eead}[1]{\end{deqarr}}
\newcommand{\eea}{\end{eqnarray}}
\newcommand{\eeaa}{\end{eqnarray*}}
\newcommand{\p}{\partial}
\renewcommand{\to}{\rightarrow}
\renewcommand{\phi}{\varphi}
\renewcommand{\epsilon}{\varepsilon}
\renewcommand{\hat}{\widehat}
\newcommand{\<}{\langle}
\renewcommand{\>}{\rangle}
\newcommand{\dm}{{\partial M}}
\newcommand{\w}{\widetilde}
\theoremstyle{plain}
\newtheorem{lemma}{Lemma}[section]
\newtheorem{proposition}[lemma]{Proposition}
\newtheorem{theorem}[lemma]{Theorem}
\newtheorem{corollary}[lemma]{Corollary}
\theoremstyle{remark}
\theoremstyle{definition}
\newtheorem{remark}[lemma]{Remark}
\def\blacksquare{\hbox to .60em {\vrule width .60em height .60em}}
\numberwithin{equation}{section}
\begin{document}

\title[ ]{Well-Posed Geometric Boundary data in General Relativity, III: \\
Conformal-mean curvature boundary data}

\author{Zhongshan An and Michael T. Anderson}

\address{Institute of Geometry and Physics, 
University of Science and Technology of China,
No. 99 Xiupu Road, Shanghai, China, 201305}
\email{zhshan.an@gmail.com}

\address{Department of Mathematics, 
Stony Brook University,
Stony Brook, NY 11794}
\email{michael.anderson@stonybrook.edu}

\begin{abstract} 
  This is the third work in a series on the (local in time) well-posedness of the initial boundary value problem (IBVP) for the vacuum Einstein equations in 
general relativity with geometric boundary conditions. Here we study the conformal-mean curvature boundary conditions, consisting of the conformal 
class of the boundary metric and mean curvature of the boundary. We prove that at metrics of uniformly bounded geometry to all orders, the linearized 
problem has a solution space with dense range in $C^{\infty}$ and establish a Holmgren-type uniqueness theorem valid for general smooth linearized solutions. 
These results require the addition of an arbitrary corner angle term at the intersection of the Cauchy surface and the timelike boundary. 
\end{abstract}

\thanks{MSC 2010: 35L53, 35Q76, 58J45, 83C05\\
Keywords: hyperbolic initial boundary value problems, Einstein equations, energy estimates}

\maketitle

\section{Introduction}

   This work is the third in a series on the initial boundary value problem (IBVP) for the Einstein equations in general relativity. To describe the problem, let 
$M = I\times S$, where $S$ is an oriented $n$-manifold with boundary $\Si = \p S$ a compact $(n-1)$-manifold without boundary. The boundary surface 
need not be connected, but $S$ (and so $M$) are assumed to be connected. Consider Lorentz metrics $g$ on $M$ which are globally hyperbolic (in the sense of 
manifolds with timelike boundary) with time function $t: M \to I$ giving a parametrization of $I$ with $S = t^{-1}(0)$ serving as a Cauchy or initial data 
surface for $(M, g)$. The space of initial data $\cI$ is the space of all pairs $(\g, \k)$ where $\g$ is a Riemannian metric on $S$ and $\k$ is a symmetric 
bilinear form on $S$. The timelike boundary of $(M,g)$ is given by the cylinder $\cC = I\times \Si$. 

  Consider metrics $g$ as above which are vacuum Einstein metrics, i.e.
\be \label{Ein}
\Ric_g = 0.\footnote{All of the results of this work apply also to Einstein metrics with any cosmological constant $\Lambda$.}
\ee
Let $\bE$ be the space of all smooth ($C^{\infty}$) vacuum Einstein metrics $g$ on $M$; this is also known as the pre-moduli space or pre-covariant 
phase space of vacuum solutions. The issue is then to find effective descriptions of $\bE$ in terms of initial data on the initial slice $S$ and boundary data 
on the boundary $\cC$. 

Let then $\cI_0 \subset \cI$ be the subspace of all initial data $(\g, \k) \in \cI$ satisfying the vacuum constraint equations, cf.~\eqref{Gauss}-\eqref{GC} below.
Given a choice of boundary data space $\cB$, (local-in-time) well-posedness of the IBVP amounts to proving the existence of a 1-1 
correspondence (or better, homeomorphism) 
 \be \label{wp}
\cE \simeq \cI_0 \times_{c}\cB,
\ee
valid at least in suitable neighborhoods or thickenings of $S$ in $M$. Here 
$$\cE = \bE/\Diff_0(M)$$
is the quotient of $\bE$ by the action of the group $\Diff_0(M)$ of diffeomorphisms $\f: M \to M$ which restrict to the identity on $S \cup \cC$. The space 
$\cE$ is the natural moduli space of solutions, closely related to the covariant phase space of vacuum Einstein metrics, cf.~\cite{HW}. The space 
$\cB$ is the space of boundary data prescribed on the timelike boundary $\cC$, discussed in detail below. The subscript $c$ in \eqref{wp} 
denotes the compatibility conditions between the initial and boundary data at the corner $\Si$. 

\medskip 

  Thus the issue is whether one can effectively describe or parametrize the space of vacuum solutions on $M$ by their initial and boundary data (up to 
isometry in $\Diff_0(M)$). It is well-known that this is the case for the Cauchy or initial value problem where there is no boundary, cf.~\cite{C}, \cite{C2}, \cite{HE}.  
We note  that we are only concerned here with local-in-time well-posedness, asserting the existence and uniqueness of a solution $g$ with given initial and 
boundary data only for a finite, possibly small, proper time off the initial surface $S$. The long-time behavior of solutions is a fundamentally different 
issue, which to start requires a good understanding of the local-in-time behavior. We also assume here all data is $C^{\infty}$ smooth. 
 
    Let $g_S$ and $K_S$ denote the induced metric and second fundamental form (extrinsic curvature) of $g$ on $S$. Similarly, let $b(g) \in \cB$ 
 denote the boundary data induced by $g$. In addition, the correspondence above should be given by a mapping 
 $$\Psi: \cE \to \cI_0 \times_{c}\cB,$$
 $$\Psi(g) = (g_S, K_S, b(g)),$$
 which has at least a continuous inverse; this is the continuous dependence of the solution on initial and boundary data. In fact it is shown in 
 \cite{I} that the moduli space $\cE$ is a smooth Frechet manifold; in many cases the target data space $\cI_0 \times_{c}\cB$ is also a smooth 
 Frechet manifold and the map $\Psi$ is a smooth tame mapping of Frechet manifolds, (cf.~\cite{Ham} for background on Frechet manifolds). 
 
 \medskip 

   It is a longstanding open question whether there is a boundary data space $\cB$ for which such a parametrization $\Psi$ of $\cE$ exists at all. 
We consider only boundary data spaces $\cB$ which are geometric, i.e.~formed from a (half-dimensional) subspace of the space of 
Cauchy data, 
$$(\g_{\cC}, A),$$
at $\cC$. Here $\g_{\cC}$ is a Lorentz metric on $\cC$ (corresponding to the metric $g_{\cC}$ induced by $g$ on $\cC$) and $A$ is a symmetric bilinear
 form, (corresponding to the second fundamental form $A_{\cC}$ of $\cC$ in $(M, g)$). There has been a considerable amount of prior work on the 
 well-posedness of the IBVP, cf.~\cite{FN}, \cite{KRSW1}, \cite{KRSW2}, \cite{KW}, \cite{FS1}, \cite{FS2} and also ~\cite{ST} for  a general survey. 
 However, none of these prior studies concern the IBVP with geometric boundary data. Clearly, geometric boundary data are the most natural to 
 consider, from both geometric and physical viewpoints. 
 
    We also emphasize that for a geometric IBVP, the evolution of the metric $g$ off its initial data surface $S$ and the evolution of 
the boundary $\cC$ off its initial corner surface $\Si$ are both dynamical. Thus, the 'location' of the boundary is not known or fixed 
in advance. If \eqref{wp} holds, the evolution of the boundary $\cC$ off its initial corner $\Si$ is uniquely determined by the choice
 of initial and boundary data. Note also that generically, the vacuum metric $g$ will not extend past the boundary $\cC$ as a vacuum 
 solution (unless one can find very particular boundary conditions ensuring such a property). 
 
 \medskip 
  
 The simplest and perhaps most natural choices of boundary data are Dirichlet boundary data $\cB_{Dir}$, where one fixes or prescribes arbitrarily the 
boundary metric $\g_{\cC}$, or Neumann boundary data $\cB_{Neu}$, where one fixes the arbitrary second fundamental form $A$. These are the most 
common and well-behaved in many other geometric and physical PDE problems. However, it is shown in \cite{A2}, \cite{AA2} that these choices 
of boundary data are not well-posed for the Einstein equations in full generality, i.e.~without further restrictions.  On the other hand, it is proved in 
\cite{I} that Dirichlet boundary data is well-posed in a large open region of initial-boundary data. Further, in \cite{II} the IBVP is proved to be 
well-posed in general for a `twisting' of Dirichlet boundary data, which however is not fully geometric. Ideally, one would like to find a choice of 
fully geometric boundary data $\cB$ for which well-posedness holds in full generality. 

   In this work, we consider the boundary data space given by 
\be \label{cB}
\cB_C = {\rm Conf}^{\infty}(\cC)\times C^{\infty}(\cC),
\ee
$$b(g) = ([g_{\cC}], H_{\cC})$$
consisting of the pointwise conformal class of the metric induced on the boundary and a scalar function giving the mean curvature of the boundary. 

  The boundary conditions \eqref{cB} were first introduced in the Riemannian or Euclidean context \cite{A1}, where they were shown to give a 
well-posed elliptic BVP for the Einstein equations (in natural gauges), cf.~also \cite{W} for a physics-oriented perspective. In addition, they are 
well-posed in the parabolic or Ricci-flow setting, cf.~\cite{G}. Independently, they were also introduced in the analysis of the fluid-gravity 
correspondence relating the Navier-Stokes equations with the (Lorentzian) vacuum Einstein equations, cf.~\cite{BS}, \cite{AABN}, \cite{AGAM} and 
further references therein, cf.~also \cite{BSS}. In contrast to Dirichlet boundary data, it was shown in \cite{AA2} that the vacuum Gauss and Gauss-Codazzi 
constraint equations (equivalent to the vacuum Hamiltonian and momentum constraint equations) are always solvable along $\cC$, for arbitrary boundary 
data in $\cB_C$. This provides a preliminary but non-trivial check on the validity of well-posedness for the boundary data \eqref{cB}. 

\medskip 

  The choice of the gauge group $\Diff_0(M)$ plays an important role in understanding the relation between $\cE$ and $\cI_0\times_c \cB$. Consider 
for instance the intersection angle $\a_g$ along the corner $\Si$ between $S$ and $\cC$: 
\be \label{angle}
\a_g: \Si \to \bR,
\ee 
$$\a_g = g(\nu_S, \nu_{\cC}),$$
where $\nu_S$, $\nu_{\cC}$ are the future pointing and outward pointing unit normals to $S$ and $\cC$ respectively. Clearly $\a_g$ is 
$\Diff_0(M)$-invariant. It is shown in \cite{I, II} that $\a_g $ is determined by the initial and boundary data and their compatibility at $\Si$ for Dirichlet 
boundary data and twisted Dirichlet boiundary data respectively. If well-posedness as in \eqref{wp} holds, either the angle $\a$ must be determined by the compatibility 
conditions between initial and boundary data, or $\a$ is determined after the fact or aposteriori, only by the solution $(M, g)$ itself and thus is uniquely 
determined only indirectly by initial and boundary data. Conversely, the failure of well-posedness may imply that it is necessary to append, at least, an 
additional space of corner data to $\cI_0\times_c \cB$ to obtain well-posedness. 

  On the other hand, let $(\hat M, g)$ be a vacuum Einstein metric and suppose that $(\hat M, g)$ is (future and past) maximal globally hyperbolic with 
geometric boundary data $b(g)$ on $\cC$. Then $(\hat M, g)$ may be described as the maximal globally hyperbolic development of the initial, boundary 
and perhaps corner-type data of some Cauchy slice $S$. However, as is the case with the (pure) Cauchy problem \cite{C}, \cite{HE}, there are 
many choices of Cauchy surfaces giving the same (or isometric) maximal solution $(\hat M, g)$. Thus, it is natural to argue that the correct gauge 
group is $\Diff_0(\hat M)$ consisting of diffeomorphisms $\f: \hat M \to \hat M$ equal to the identity on the boundary $\cC$, thus dropping the restriction 
that $\f = {\rm  Id}$ on some Cauchy surface. With this larger gauge group, the corner angle may no longer be of any relevance. For example, in 
$2+1$ dimensions where solutions $(\hat M, g)$ are flat and so embed as domains in Minkowski space $\bR^{1,2}$, well-posedness under the larger 
gauge group $\Diff_0(\hat M)$ essentially only concerns the existence and unique determination of the evolution of the boundary cylinder 
$\cC \subset \bR^{1,2}$ off its initial surface $\Si \simeq S^1$ by the boundary data.

  However, an exact parametrization (analogous to \eqref{wp}) of the full space of solutions $\bE/\Diff_0(\hat M)$ is much more difficult to determine. 
Even in the case where there is no boundary, for instance when $S$ is compact without boundary, there is no known effective parametrization 
of $\bE/\Diff_0(\hat M)$. Solutions are uniquely determined by the space of vacuum solutions of the constraint equations on initial data, but the 
equivalence relation $(\g_1, \k_1) \sim (\g_2, \k_2)$ if and only if there are embeddings of this data into a common vacuum $(\hat M, g)$, is not at all 
well understood. 

  The same considerations as above apply to the gauge group of the covariant phase space $\bP$, cf.~\cite{HW}, \cite{LW}, \cite{WZ}, \cite{K}, 
where the gauge group is the group of diffeomorphisms $\Diff_{Sym}(M)$ of $M$ generated by vector fields in the kernel of the naturally defined 
pre-symplectic form $\O$ on $\bP$. This is the space and gauge group most relevant physically and to the study of quasi-local 
Hamiltonians and associated quasi-local conserved quantities, cf.~\cite{Sz} and references therein.  

\medskip 

   Returning to the boundary data \eqref{cB}, it turns out that it is in fact necessary to add on a space of corner data to the initial and 
boundary data for well-posedness, even under the large gauge group $\Diff_0(\hat M)$ (and even in $2+1$ dimensions). This follows by recent work of 
Anninos-Galante-Maneerat \cite{AGM1}, \cite{AGM2} and Liu-Santos-Wiseman \cite{LSW}, who carried out a general mode stability analysis for 
the linearization of the boundary value problem \eqref{cB} around certain special backgrounds. For instance, it is shown in \cite{LSW} that there is a 
1-parameter family of non-isometric timelike spherically symmetric domains (solid cylinders)  in flat Minkowski space $\bR^{1,3}$ which have the 
same boundary data $([\g], H) = ([\g_0], 2)$ as the standard round cylinder $\cC = \{r = 1\} \subset \bR^{1,3}$ with induced metric 
$\g_0 = -dt^2 + g_{S^2(1)}$. This family has the same fixed flat Cauchy surface $\{t = 0\}\cap \{r \leq 1\}$ in $\bR^{1,3}$; the 1-parameter family is 
described by varying the corner angle \eqref{angle} between $S$ and $\cC$. 

\medskip 

    Let $\cA = C^{\infty}(\Si)$ be the space of smooth functions on the corner $\Si$. Thus, in place of \eqref{wp}, one should consider the smooth map   
\be \label{Phi}
\Phi: \cE \to (\cI_0 \times \cB_C \times \cA)_c.
\ee
$$\Phi(g) = ((g_S, K_S), ([g_{\cC}], H_{\cC}), \a_g),$$
replacing the IBVP by the initial boundary corner value problem IBCVP. 

  Currently known methods to prove well-posedness require a choice of gauge to break the symmetry of the diffeomorphism group. 
Various choices of gauge may reduce the Einstein equations to a symmetric hyperbolic $1^{\rm st}$ order system or a hyperbolic system of 
$2^{\rm nd}$ order wave equations for instance. Given such a reduction, there is a well-established theory of the well-posedness of the IBVP 
for such systems provided the boundary data satisfy certain conditions, such as maximally dissipative boundary conditions, uniform KL condition, 
etc; we refer to \cite{BS}, \cite{Sa} and references therein  for further details. All prior work on the IBVP for the vacuum Einstein equations uses this 
approach. In the work of Friedrich-Nagy \cite{FN}, the corner angle $\a$ appears explicitly in the condition for well-posedness. However, in \cite{FN} 
the angle $\a$ is needed to precisely define the boundary conditions along $\cC$ and so the choice of the boundary data space itself depends on the corner 
angle. Similarly, the corner angle also appears in \cite{KRSW1} and related works, where again it is used to determine the boundary conditions. 

\medskip 

  In this work, we study the linearization or derivative of the map $\Phi$ in \eqref{Phi} in the smooth, $C^{\infty}$ setting. For the remainder of this work, 
we assume $\cB = \cB_C$. 

   To describe this in more detail, let $Met(M) = Met^{\infty}(M)$ be the space of $C^{\infty}$ smooth Lorentz metrics $g$ on $M$, as above globally 
hyperbolic with timelike boundary and let $S^2(M)$ be the space of smooth symmetric bilinear forms on $M$.  One has a natural smooth extension 
of the map $\Phi$ above to 
\be \label{Phi1}
\Phi: Met(M) \to [S^2(M) \times \cI \times \cB \times \cA]_c := \cT.
\ee
$$\Phi(g) = (\Ric_g, (g_S, K_S), ([g_{\cC}], H_{\cC}), \a_g).$$
Here $\cI$ denotes the full space of initial data $(\g, \k)$ on $S$, not necessarily satisfying any constraint or vacuum constraint equations. The target 
space $\cT$ is the full product space of free or uncoupled data, subject to the requisite compatibility conditions of the data along the corner $\Si$. 
As noted above, the initial, boundary and corner data are invariant under the action of the gauge group $\Diff_0(M)$; however in the bulk, the action 
$(\f, \Ric) \to \f^*(\Ric)$ is not invariant in general, unless one is on-shell, i.e.~on the space $\bE$ of vacuum solutions. 

  Closely related to the gauge group are the Hamiltonian and momentum constraint equations, i.e.~the Gauss and Gauss-Codazzi equations 
on $S$: 
\be \label{Gauss}
|K|^2 - H^2 - R_{g_S} = R_g - 2\Ric_g(\nu,\nu) = \tr_{g_S}\Ric_g - 3\Ric_g(\nu,\nu),
\ee
\be \label{GC} 
{\rm div}_{g_S}(K - Hg_S) = \Ric_g(\nu, \cdot). 
\ee 
Here $K = K_S, H = H_S = \tr_{g_S}K_S$ and $\nu = \nu_S$ are defined as above. The same equations hold along the timelike boundary $\cC$, 
but with $-R_{g_S}$ replaced by $+R_{g_{\cC}}$. These equations are defined on the domain $Met(M)$, but are not defined on the target 
space $\cT$, since the normal vector $\nu$ is not defined on $\cT$. For this reason, we enlarge the target domain by adding the normal 
vector $\nu_S$ along $S$ to the data. Let then $\hat \cT = (\cT \times \bV_S')_c$ where $\bV_S'$ is the space of smooth vector fields $\nu$ along $S$ 
nowhere tangent to $S$ and consider the extension $\hat \Phi$ of $\Phi$ in \eqref{Phi1} given by 
\be \label{Phi2}
\hat \Phi: Met(M) \to \hat \cT,
\ee
$$\hat \Phi(g) = (\Ric_g, (g_S, K_S, \nu_S), ([g_{\cC}], H_{\cC}), \a_g).$$ 
 Since the normal vector $\nu_S$ is part of the data in $\hat \cT$, the Gauss and Gauss-Codazzi equations \eqref{Gauss}-\eqref{GC} along 
$S$ now do make sense on $\hat \cT$; note the second equality in \eqref{Gauss}. (These equations are not defined along the boundary $\cC$ 
however). Denote then a general element in $\hat \cT$ as $(Q, (\g, \k, \nu), ([\s], \ell), \a)$ and let $\Lambda^1(S)$ be the 
space of 1-forms along $S$. There is a naturally defined constraint map 
\be \label{Const}
C: \hat \cT \to \Lambda^1(S),
\ee
$$C_0(Q, (\g, \k, \nu), ([\s], \ell), \a) = |\k|^2 - (\tr_{\g} \k)^2 - R_{\g} -  \tr_{\g} Q + 3Q(\nu,\nu),$$
$$C_i = {\rm div} (\k - (\tr_{\g}\k)\g) - Q(\nu, \cdot) .$$
The constraint equations \eqref{Gauss}-\eqref{GC} imply  ${\rm Im} \, \hat \Phi$ is contained in the zero-set $\cZ = C^{-1}(0)$ of $C$, 
so that 
\be \label{F2}
\hat \Phi: Met(M) \to \cZ \subset \hat \cT.
\ee
In particular, $\hat \Phi$ cannot be surjective onto $\hat \cT$. 
 
  The constraint space $\cZ$ involves a coupling of the bulk data and the initial data in the target space $\hat \cT$, (and hence induces further 
compatibility conditions along the corner $\Si$). However, again related to the gauge group $\Diff_0(M)$, there are also well-known restrictions 
on the bulk data $\Ric_g$ in $\hat \cT$. Let 
$$\Diff_1(M) \subset \Diff_0(M),$$
denote the subgroup of diffeomorphisms of $M$ which equal the identity to first order along $S$ and equal to the identity to zero 
order along $\cC$. Let $\b_g = \d_g + \frac{1}{2}d\tr_g$ be the Bianchi operator on symmetric bilinear forms on $M$. The Bianchi identity 
$\b_g \Ric_g = 0$ for Ricci curvature implies that $\Ric_g$ is not tangent to the orbit $\cO_{\Diff_1(M)}$ of the action of the diffeomorphism group 
$\Diff_1(M)$ on $\hat \cT$, 
$$(\Ric_g, \cdots)  \notin T(\cO_{\Diff_1(M)}),$$
where $\cdots$ denotes the remaining components of $\hat \Phi(g) \subset \hat \cT$ as in \eqref{Phi2}. We recall that $T(\cO_{\Diff_1(M)})$ is the 
space of symmetric forms of the form $\d^*X$ with $X = 0$ to first order on $S$ and zero order on $\cC$. In particular, $\hat \Phi$ cannot be 
surjective onto $\cZ$. 

\medskip 

  To deal with all of the issues mentioned above, a choice of gauge is needed to attack well-posedness problems. In this work, we use the well-known 
and commonly used harmonic gauge (also variously known as wave gauge, Bianchi gauge or de Donder gauge). Thus, assume $M \subset \bR^4$ 
topologically and consider the Euclidean metric $(\bR^4, g_{Eucl})$. (More generally, one may assume $M \subset \w M$ and let $g_R$ be a 
complete Riemannian metric on $\w M$). Let $V_g$ be the tension field of the identity wave map $(M, g) \to (\w M, g_R)$, cf.~\cite{GG}. In local 
coordinates $\{x^{\mu}\}$ on $M$, $V_g$ has the form 
$$V_g = \Box_g x^{\mu}\p_{x^{\mu}}.$$

  Much of this work is concerned with the analysis of the ``gauge fixed map" (related to the gauge-reduced Einstein equations):
\be \label{PhiH}
\Phi^H: Met(M) \to \cT^H,
\ee
$$\Phi^H(g) = (\Ric_g + \d_g^* V_g, (g_S, K_S, \nu_S, V_g|_{S}), ([g_{\cC}], H_{\cC}, V_g|_{\cC}), \a_g),$$
where $V_g|_S$ and $V_g|_{\cC}$ are the restrictions of the gauge field $V_g$ to $S$ and $\cC$ respectively. Here, $\cT^H = 
(\hat \cT \times \bV_M)_c$ is the space of free or unconstrained target data $\hat \cT \times \bV_M$, subject only to the smooth 
compatibility conditions along the corner $\Si$. The target space $\hat \cT$ for $\hat \Phi$ embeds in $\cT^H$ by setting $V = 0$ 
along $S\cup \cC$, (cf.~\S 5). 

   It is proved in \cite{I} that the gauged target space $\cT^H$ is a smooth Frechet manifold for Dirichlet boundary data $\cB_{Dir}$. It is 
easily verified that the same proof may be easily adjusted to prove that $\cT^H$ above is also a smooth Frechet manifold. In particular, 
it follows that $\Phi^H$ is a smooth tame map between Frechet manifolds. 

\medskip 

  To describe the first main result, let $\cD \subset Met(M)$ be the subspace of smooth metrics $g$ which have uniformly bounded geometry to 
all orders, i.e.~in suitable local coordinates with $g = g_{\a\b}dx^{\a}dx^{\b}$, 
$$||g_{\a\b}||_{C^m} \leq K,$$
for all $m \geq 0$ and for some fixed constant $K < \infty$. It is easy to see, cf.~\S 4, that $\cD$ is dense in $Met(M)$. 
Abusing notation slightly, we will use the same definition and notation for $h \in TMet(M)$ and for target data $\tau' \in T(\cT^H)$.
\footnote{All of the results to follow hold for the more general condition $||g_{\a\b}||_{C^m} \leq K^m$ for some $K$ and similarly for $\tau'$.}

\begin{theorem}\label{ThmI}
For any smooth vacuum Einstein metric $g \in \cD$, the derivative 
$$D\Phi_g^H: T_g Met(M) \to T(\cT^H),$$
$$D\Phi_g^H(h) =  \big([\Ric_g + \d_g^* V_g]'_h, (g_S, K_S, \nu_S, V_g|_{S})'_h, ([g_{\cC}], H_{\cC}, V_g|_{\cC})'_h, \a'_h),$$
is injective and has dense range. 

\end{theorem}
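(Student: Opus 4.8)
The plan is to identify $D\Phi^H_g$, at a vacuum metric $g$, with the solution map of a linear hyperbolic initial-boundary value problem for the symmetric form $h$, and to establish unique solvability together with smooth dependence on $g$. The starting point is the structure of the bulk operator $L_g(h) := [\Ric_g + \d^*_g V_g]'_h$. Combining the standard formula for the linearized Ricci tensor with the harmonic-gauge correction $\d^*_g V_g$, the non-hyperbolic (Bianchi-type) second-order terms cancel at the level of the principal symbol, leaving $\sigma(L_g)(\xi) = -\tfrac{1}{2}|\xi|^2_g\,\mathrm{Id}$ on symmetric $2$-forms; this cancellation is purely algebraic and uses neither $\Ric_g = 0$ nor $V_g = 0$. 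Since $g$ is Lorentzian, $L_g$ is therefore a wave operator with lower-order coefficients built from $g$ and its curvature, hence depending smoothly on $g$, and the bulk component of $D\Phi^H_g$ is the linear wave equation $L_g(h) = \tau$.

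Next I would show that the initial block $[(g_S, K_S, \nu_S, V_g|_{S})]'_h$ is equivalent to prescribing the full Cauchy data $(h|_S,\ \Del_{\nu_S} h|_S)$ for $L_g$. The variation of the induced metric recovers the tangential part of $h|_S$, the variation of the unit normal $\nu_S$ (algebraically determined by the normal-normal and normal-tangential components of $h$) recovers the remaining components of $h|_S$, while $K'_S$ and the restriction $[V_g|_S]'$ of the tension field together supply $\Del_{\nu_S} h|_S$. A dimension count ($6+4$ components for $h|_S$ and $6+4$ for $\Del_{\nu_S} h|_S$, matching the $20$ Cauchy components) makes this plausible, and the content of the step is that the resulting triangular linear map is a ($g$-smooth) isomorphism onto the Cauchy-data space.

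The crux is the boundary block $[([g_{\cC}], H_{\cC}, V_g|_{\cC})]'_h$ on $\cC$ together with the corner datum $(\a'_h)_{\Si}$. The conformal-class variation imposes a Dirichlet condition on the trace-free tangential part of $h|_{\cC}$ ($5$ conditions on the $3$-dimensional $\cC$); the mean-curvature variation $H'_{\cC}(h)$ is a Neumann-type condition involving $\Del_{\nu_{\cC}} h$ ($1$ condition); and the four conditions $[V_g|_{\cC}]'_h$, being first order in $h$, fix the gauge along $\cC$ --- in total the correct number ($10$) for the system $L_g$. The main obstacle is that these conditions do not by themselves yield a well-posed problem: the conformal factor of $h|_{\cC}$ (the trace of the tangential part, left free by the conformal class) is not directly prescribed and is determined only after integrating an equation along the generators of the cylinder $\cC = I\times\Si$, which requires an initial value at the corner $\Si = \{0\}\times\Si$. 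This is precisely the degree of freedom realized by the Liu--Santos--Wiseman family \cite{LSW}, and the corner-angle variation $\a'_h$ supplies the missing corner initial condition. One must therefore (i) verify the energy estimate, equivalently the uniform Lopatinski/Kreiss condition, for the mixed Dirichlet--Neumann--gauge conditions in the interior of $\cC$, and (ii) carry out the corner analysis at $\Si$ showing that $\a'_h$ closes the system. Granting this, injectivity of $D\Phi^H_g$ follows from uniqueness for the homogeneous problem (a nonzero $h$ with vanishing bulk, Cauchy, boundary and corner data would contradict the energy estimate once the corner mode is excluded by $\a'_h = 0$), and surjectivity follows by solving the wave problem for the prescribed data, the compatibility conditions built into $\cT^H$ along $\Si$ guaranteeing a solution smooth up to the corner.

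Finally, smooth dependence of $D\Phi^H_g$ and of its inverse on $g$ is inherited from the smooth $g$-dependence of $L_g$, of the Cauchy-data and boundary operators, and of the solution operator of a well-posed linear hyperbolic IBVP on its coefficients. I expect the genuinely delicate steps to be the boundary well-posedness estimate with its mixed-order (Dirichlet/Neumann/gauge) structure and, above all, the corner analysis at $\Si$ that makes the corner-angle datum exactly compensate the degeneracy of the conformal--mean-curvature conditions.
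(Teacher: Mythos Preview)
Your plan has the right identification of the bulk operator as a tensorial wave operator and the correct understanding that the corner angle supplies the missing initial datum for the conformal factor along $\cC$. However, the approach you propose for the boundary analysis --- verifying a uniform Kreiss/Lopatinski condition and deducing both existence and uniqueness from the resulting energy estimate --- is precisely what the paper asserts \emph{cannot} be done here. The authors state explicitly (Introduction and Remark~\ref{derivloss}) that they have not been able to establish $H^s$ energy estimates for this IBVP, and that the presence of the corner datum already indicates the problem is not a standard IBVP amenable to the existing theory. The difficulty is the term $\hat Z(h^{\tT}) = \tfrac12\nabla_\nu(\b_{\cC}h^{\tT})$ in the evolution equation for $h(\nu)^{\tT}$ along $\cC$: this involves a Neumann derivative of the bulk solution with no small coefficient, so solving for the full Dirichlet data on $\cC$ requires feeding back the bulk solution, and each pass of this loop loses a derivative. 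The existence proof is therefore an iteration in $C^\infty$ (Theorem~\ref{exist}), not an application of Kreiss theory.

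The uniqueness argument is where your proposal diverges most sharply. You appeal to the energy estimate to kill a nontrivial element of the kernel; since no such estimate is available, the paper instead exploits a formal self-adjointness property (Proposition~\ref{selfad}) coming from the variational structure of the Einstein--Hilbert action with the modified boundary term \eqref{EH}. After a gauge shift to the divergence-free gauge, one shows $\int_M\langle \w L(h),k\rangle = \int_M\langle h,\w L(k)\rangle$ for $h$ with vanishing data at the bottom slice and $k$ with vanishing data at the top slice; surjectivity (already established) then forces $\mathrm{Ker}\,\w L = 0$, and one transfers this back to $L$. This ``surjectivity implies injectivity via self-adjointness'' mechanism is the genuinely new ingredient and is not visible in your outline.
\endproof
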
 

  In fact we will show that $D\Phi_g^H$ surjects onto the dense space $\cD \subset T(\cT^H)$. The injectivity or uniqueness statement 
$$Ker D\Phi_g^H = 0$$
holds for general $C^{\infty}$ smooth $h$ and so may be considered as a Holmgren-type uniqueness result in this setting. 

  This leads to the following results for the (geometric) maps $\hat \Phi$ in \eqref{Phi2} and $\Phi$ in \eqref{Phi}, where the gauge 
fixing is removed step-by-step from $\Diff_1(M)$ to $\Diff_0(M)$. 

\begin{theorem}\label{ThmII} 
Suppose $\Ric_g = 0$ with $g \in \cD$. Then the map 
$$\hat \Phi: Met(M) \to \cZ,$$
has derivative $D\hat \Phi: TMet(M) \to T\cZ$, 
$$D\hat \Phi_g(h) = (\Ric'_h, (g_S, K_S, \nu_S)'_h, ([g_{\cC}], H_{\cC})'_h, \a'_h),$$
satisfying 
\be \label{transv}
{\rm Im}D \hat \Phi_g \oplus T(\cO_{\Diff_1 (M)}) \subset T \cZ,
\ee
with ${\rm Im}D \hat \Phi_g \oplus T(\cO_{\Diff_1 (M)})$ dense in $T \cZ$. 
The map $\hat \Phi$ thus descends to the quotient $Met(M)/\Diff_1(M)$ and the induced map 
$$D\hat \Phi: T(Met(M)/\Diff_1(M)) \to T(\cZ/\Diff_1(M)),$$
$$D \hat \Phi_g(h) = (\Ric'_h, (g_S, K_S, \nu_S)'_h, ([g_{\cC}], H_{\cC})'_h, \a'_h),$$
is injective with dense range. The equivalence class in $T(\cZ/\Diff_1(M))$ is represented by the symmetric forms $F$ on $M$ in the kernel 
of the Bianchi operator $\b_g$.

Further, dropping the normal vector $\nu_S$, the map $\hat \Phi$ descends to the full quotient $Met(M)/\Diff_0(M)$ and again the linearization 
$$D\Phi: T(Met(M)/\Diff_0(M)) \to T(\cZ/\Diff_0(M)),$$
$$D \Phi_g(h) = (\Ric'_h, (g_S, K_S)'_h, ([g_{\cC}], H_{\cC})'_h, \a'_h),$$
is injective with dense range. 
\end{theorem}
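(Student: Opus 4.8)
\emph{Proof proposal.} The plan is to deduce both descent statements from the isomorphism $D\Phi^H_g$ of Theorem~\ref{ThmI} by successively stripping the gauge field out of the target data. Since all objects in sight are $\Diff_0(M)$-natural, I may assume the representative $g$ is in harmonic gauge, so that $V_g = 0$; then at a vacuum background the gauge-fixed bulk operator linearizes to $[\Ric_g + \delta_g^* V_g]'_h = \Ric'_h + \delta_g^* V'_h$, where $V'_h$ is the linearized tension field, related to the Bianchi operator by $V'_h = -(\b_g h)^\sharp$. Two structural facts organize the argument. First, the contracted Bianchi identity $\b_g\Ric_g\equiv 0$ linearizes, at $\Ric_g=0$, to $\b_g\Ric'_h = 0$; hence $\mathrm{Im}\,D\hat\Phi_g$ lies in the subspace of $T\cZ$ whose bulk component is $\b_g$-closed, which is precisely the claim that classes in $T(\cZ/\Diff_1(M))$ are represented by Bianchi-free forms $F$. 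Second, for $X$ vanishing to first order on $S$ and to zero order on $\cC$, naturality of $g_S, K_S, \nu_S, [g_{\cC}], H_{\cC}$ and $\a_g$ shows that $\delta_g^* X$ lies in $T\cZ$ with vanishing initial, boundary and corner components; these are the orbit directions $T(\cO_{\Diff_1(M)})$.

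For the transversality \eqref{transv} I would proceed as follows. Given $(\dot Q, \dot d)\in T\cZ$, apply Theorem~\ref{ThmI} to produce the unique $h$ with $[\Ric_g+\delta_g^*V_g]'_h = \dot Q$, prescribed geometric data $\dot d$, and vanishing gauge data $V'_h|_S = 0$, $V'_h|_\cC = 0$. Set $X := V'_h$. Applying $\b_g$ to the bulk equation and using $\b_g\Ric'_h = 0$ yields $\b_g\delta_g^* X = \b_g\dot Q$, a wave equation for $X$ whose Dirichlet data on $S\cup\cC$ vanish. The decisive step is a linearized gauge-propagation lemma: since $(\dot Q,\dot d)$ satisfies the linearized constraint and $X|_S = 0$, the normal--normal and normal--tangential components of the constraint along $S$ force $\nabla_\nu X|_S = 0$ as well. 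Hence $X\in\bV_1$, and $\Ric'_h = \dot Q - \delta_g^* X$, so $(\dot Q,\dot d) = D\hat\Phi_g(h) + (\delta_g^* X, 0)$, giving $\mathrm{Im}\,D\hat\Phi_g + T(\cO_{\Diff_1(M)}) = T\cZ$. The sum is direct: if $D\hat\Phi_g(h) = (\delta_g^* X, 0)$ with $X\in\bV_1$, then $\Ric'_h = \delta_g^* X$, and applying $\b_g$ gives $\b_g\delta_g^* X = 0$; as $X$ has vanishing Cauchy data on $S$, uniqueness for this wave operator forces $X = 0$.

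The induced map on $Met(M)/\Diff_1(M)$ is then handled by the same ingredients. It is well defined because $D\hat\Phi_g(\delta_g^* Y) = 0$ for $Y\in\bV_1$, since $\Ric'_{\delta_g^* Y} = \tfrac12\cL_Y\Ric_g = 0$ and the geometric data are natural. Surjectivity onto $T(\cZ/\Diff_1(M))$ is immediate from the spanning statement just proved. For injectivity I must show $\ker D\hat\Phi_g = \{\delta_g^* Y : Y\in\bV_1\}$: given $h$ with $\Ric'_h = 0$ and vanishing geometric data, solve the wave equation $V'_{\delta_g^* Y} = V'_h$ for $Y\in\bV_1$ (zero Cauchy data on $S$, with the constraint again killing $\nabla_\nu Y|_S$) so that $h - \delta_g^* Y$ is in harmonic gauge with vanishing bulk, geometric and gauge data; Theorem~\ref{ThmI} then forces $h - \delta_g^* Y = 0$.

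Finally, the passage from $\Diff_1(M)$ to $\Diff_0(M)$ trades the normal-vector datum $\nu_S$ for the extra gauge freedom. A field $X$ with $X|_S = 0$ and $X|_\cC = 0$ but $\nabla_\nu X|_S\neq 0$ generates, through $\delta_g^* X$, a variation fixing $g_S, K_S, [g_\cC], H_\cC$ and $\a_g$ by naturality yet moving $\nu_S$; thus enlarging the gauge group from $\Diff_1(M)$ to $\Diff_0(M)$ corresponds exactly to dropping $\nu_S$ from the data. Re-running the transversality and kernel arguments above with $\bV_1$ replaced by $\bV_0$ and $\nu_S$ removed from $\dot d$ then gives the isomorphism $D\Phi: T(Met(M)/\Diff_0(M))\to T(\cZ/\Diff_0(M))$. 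I expect the main obstacle to be the linearized gauge-propagation lemma --- the precise statement that the linearized Hamiltonian and momentum constraints along $S$, together with $V'_h|_S = 0$, annihilate $\nabla_\nu V'_h|_S$ --- since this is where the Gauss--Codazzi structure defining $\cZ$ must be matched against the wave equation satisfied by $V'_h$, with particular care at the corner $\Si$ where the data on $\cC$ and on $S$ meet.
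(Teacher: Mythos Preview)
Your proposal is correct and follows essentially the same route as the paper: apply Theorem~\ref{ThmI} with vanishing gauge data $V'_h|_{S\cup\cC}=0$, then use the linearized constraint equations along $S$ (what you call the gauge-propagation lemma, and what the paper invokes via Lemma~\ref{Gauge-lemma2}) to force $\partial_t V'_h|_S=0$, so that $V'_h\in T(\Diff_1(M))$ and the decomposition $\tau'_0=D\hat\Phi(h)+(\delta^*V'_h,0,\dots,0)$ gives the spanning property; the direct-sum argument via $\b_g\delta^*X=0$ with zero Cauchy and Dirichlet data is likewise the paper's. Your treatment of injectivity on the quotient and of the passage from $\Diff_1$ to $\Diff_0$ is more explicit than the paper's (which calls these ``straightforward''); the only small inaccuracies are the formula $V'_h=-(\b_g h)^\sharp$ (the paper has $V'_h=\b h-\<D^2x^\cdot,h\>\p_{x^\cdot}$, cf.~\eqref{V'h}) and the parenthetical about the constraint in your kernel argument, which is not needed there since you simply prescribe zero Cauchy data for $Y$ directly.
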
 

  We expect that the results above are also true off-shell, where $g$ is no longer vacuum Einstein. In fact the dense range part of 
Theorem \ref{ThmI} is proved to hold off-shell. The proofs when off-shell require some modifications, but we will not address the details here. 

\medskip 

 Let $T_g \cE$ denote the tangent space to the Frechet manifold $\cE$ at $g$, equal to the linear space of solutions $h$ to the linearized vacuum 
Einstein equations $\Ric'_h = 0$ on $M$, modulo the equivalence relation $h \sim h + \d^*X$, where $X \in T(\Diff_0(M))$.  
Similarly, let $T(\cI_0)$ denote the space of infinitesimal variations $\iota' = (\g', \k')$ satisfying the linearization of the vacuum constraint equations. 

The following result is essentially an immediate consequence of the results above. 
\begin{corollary}\label{ThmIII}
For any $g \in \cD \cap \cE$, the derivative map 
\be \label{DPhi3}
D\Phi_g: T_g\cE \to T(\cI_0\times \cB \times \cA)_c,
\ee
is injective and has dense range. 
\end{corollary}

\medskip 

  The results above of course beg the question of whether $D \Phi_g$ in \eqref{DPhi3} is surjective in $C^{\infty}$ and so an isomorphism (and holds for 
general $g \in \cE$). The proofs of the results above show this would be the case if one had suitable apriori energy estimates for the domain data $h$ in 
terms of the target data $D\Phi_g(h)$. However, it follows from recent work in \cite{LRSW} that such energy estimates do not hold in general; in particular, 
they do not hold at the standard flat round cylinder $\cC = \{r = 1\} \subset \bR^{1,3}$ mentioned above. It is an open question whether energy estimates 
might nevertheless hold generically, i.e.~for generic $g \in \cE$. 

   Moreover, such surjectivity does not even hold in the Euclidean setting in general. Namely if $(M, g)$ is Euclidean Einstein and $(\p M, g_{\dm}) = 
 (S^n, g_{+1})$ is the round metric or conformal class on $S^n$, then for any variation $(0, H'_h)$ of the boundary data $([g_{\cC}], H_{\cC}) = ([g_{+1}], n)$ 
 in ${\rm Im} D\Phi$ one must have 
 $$\int_{S^n}X(H'_h) = 0,$$
 for all conformal Killing vector fields $X$ on $S^n$, cf.~\cite{A3}. Thus the image ${\rm Im} D\Phi$ is of codimension at least $n+1$ in the boundary data 
space $\cB_C$ and so of course not dense in $\cB_C$. Note this does not contradict the ellipticity of the boundary data in $\cB_C$; ellipticity implies 
only that $D\Phi$ is an isomorphism up a finite dimensional kernel and cokernel. 

\medskip 

   Following this introduction, the paper is organized as follows. In \S 2 we discuss background material needed for the work to follow. In \S 3, we develop 
the basic linearized theory for the problem, including the equations for the full (gauged) boundary data. This is used to prove the dense range property 
in Theorem \ref{ThmI} in \S 4, while \S 5 proves the uniqueness or injectivity statement in Theorem \ref{ThmI}. The proofs of the remaining 
results above are then given in \S 6.

 \medskip 
 
{\bf Acknowledgments:} We thank D. Anninos, D. Galante, C. Maneerat and E. Silverstein for interesting discussions related to this work. 
The work benefited greatly from participation in the conferences ``Timelike boundaries in theories of gravity" in Morelia, Mexico in 
August 2024 and ``Timelike boundaries in classical and quantum gravity" at the Simons Center for Geometry and Physics, Stony Brook, December 2025.

 \section{Background Material.}

  In this section, we discuss the basic background material needed for the work to follow. These include localization of the problem, 
the local geometry and compatibility conditions at the corner $\Si$ and basic information about the choice of gauge. 

\subsection{Localization.} 
 To prove the main results in \S 1, as usual with hyperbolic systems, we will localize the problem via a partition of unity and rescaling; 
this is the common ``frozen coefficient approximation" in the PDE literature. We then show later in \S 4 that these local solutions may be patched 
together to provide global (in space) solutions. 

  Given a metric $g$ on $M$ as in \S 1, the localization at a point $p \in \Si$ is a (small) neighbhorhood $U \subset M$ of $p$, diffeomorphic via 
a local chart to a Minkowski corner 
$$\mathbf R=\{(t=x^0,x^1,\dots ,x^n):t\geq 0, x^1\leq 0\}$$ 
with $S\cap U \subset \{t = 0\}$, $\cC \cap U \subset \{x^1 = 0\}$ and $x^{\a}(p) = 0$. Thus $t$ is a defining function for $S$ and $x^1$ is a 
defining function for $\cC$. Such local coordinates will be called an adapted local coordinate chart.  

As usual, Greek letters $\a, \b$ denote spacetime indices $0, \dots, n$, Roman letters denote spacelike indices $1, \dots, n$ and 
capital Roman letters $A,B$ denote corner indices $2, \dots, n$. 

  When $(M, g)$ is of size bounded away from $0$ and $\infty$, $U$ will be a small domain metrically, and correspondingly, the coordinates 
$x^{\a}$ will vary only over a small range. To renormalize this situation, as usual the metric and coordinates are rescaled simultaneously; thus 
for $\l$ small, set 
$$\w g = \l^{-2}g, \ \ \w x^\a=\l^{-1}x^\a$$
so that 
\be\label{rescale}
\w g(\p_{\w x^\a},\p_{\w x^\b})|_{\w x}=g(\p_{ x^\a},\p_{ x^\b})|_{\l x}. 
\ee
The equation \eqref{rescale} holds in the same way for any variation $h = \frac{d}{ds}(g + sh)|_{s=0}$ of $g$. Note that while the components 
$g_{\a\b}$ of $g$ are invariant under such a rescaling, all higher derivatives become small: 
$$\p_{\w x^{\mu}}^k \w g_{\a\b} = \l^k \p_{x^{\mu}}^k g_{\a\b}.$$ 
Thus the coefficients are close to constant functions in the rescaled chart, 
\be \label{eps}
||\w g - g_{\a_0}||_{C^{\infty}(U)} \leq \e = \e(\l, g),
\ee
where $g_{\a_0}$ is a flat (constant coefficent) Minkowski metric. This is the frozen coefficient approximation. 

   The exact form of the model flat metric $g_{\a_0}$ in adapted local coordinates is controlled by the corner angle $\a$ of $g$, 
or more precisely its value $\a_0 = \a(p)$ at $p$. In the $\w x^{\a}$ coordinates, the metric $\w g$ is $\e$-close to the flat model metric
\bes
g_{\a_0} =-dt^2-\a_0 dtdx^1+\sum_{i=1}^n(dx^i)^2.
\ees
Note that for $g_{\a_0}$, we have 
\be \label{normals}
\nu_S = - \nabla t = (1+\a_0^2)^{-1/2}(\p_0+\a_0 \p_1),  \ \ \nu_\cC=\nabla x^1 = (1+\a_0^2)^{-1/2}(\p_1-\a_0 \p_0),
\ee 
and 
$$g_{\a_0}(\nu_S,\nu_\cC)=\a_0.$$
Thus, given the defining function $t$ for $S$, the corner angle $\a_0$ determines the choice of the defining function $x^1$ for $\cC$ and thus 
the form of the abstract or coordinate-free Minkowski metric. 

  It is essentially clear that the local versions of the results stated in \S 1 hold for $g$ if and only if they hold for $\w g$. Briefly, 
write $D\Phi_g$ from \eqref{Phi1} in the form 
$$D\Phi_g(h) = (\Ric'_h, (h_S, K'_h)_S, (\ring{h}, H'_h)_{\cC}, (\a'_h)_{\Si}),$$
where $h_S = (g_S)'_h$, $K'_h = (K_S)'_h$, $\ring{h}$ is the trace-free part of $h^{\tT} = (g_{\cC})'_h$, and $H'_h = (H_{\cC})'_h$. 
Then the components of $D\Phi_g(h)$ and $D\Phi_{\w g}(\w h)$ are related in local coordinates by
$$(\Ric'_{\w g}(\w h), (\w h_S, \w K'_{\w h})_S, (\ring{\w h}, \w H'_{\w h})_{\cC}, \a'_{\w h})|_{\w x} = 
(\l^2 \Ric'_g(h), (h_S, \l K'_{h})_S, (\ring{ h}, \l H'_h)_{\cC}, \a'_h)|_{\l x}.$$ 
Similarly, $(\w \nu_S)'_{\w h} = (\nu_S)'_h$ and $\w V'_{\w h} = \l V'_h$ in local coordinates. 

  Now for any background metric $g$ and target data $\tau' \in T(\cT)$, form $\w g$ and $\w \tau'$ by the rescaling 
above, choosing $\l = \l(g)$ small enough, so that $\w g$ is $\e$-close to the constant coefficient metric $g_{\a_0}$. It is then easy to check 
that a solution to $D\Phi_{\w g}(\w h)=\w \tau'$ uniquely gives rise to a solution to $D\Phi_g(h)=\tau'$, where $h$ and $\w h$ 
are related as in \eqref{rescale}.\footnote{The same argument holds for an arbitrary cosmological constant $\Lambda$.} The same 
remarks apply to rescalings for $D\hat \Phi$ and $D \Phi^H$. 

\medskip 
 
  We will always assume that $U$ is embedded in a larger region $\w U$, so 
$$U \subset \w U,$$
with $\w U$ still covered by the adapted coordinates $(t, x^i)$, with $t \geq 0$ and $x^1 \leq 0$ in $\w U$ so that the initial surface $S$, boundary 
$\cC$ and corner $\Si$ in $\w U$ are an extension of the corresponding domains in $U$. We also assume \eqref{eps} still holds in $\w U$. 
All target data in $\cT$, $\hat \cT$ and later $\cT^H$ given in $U$ is extended off $U$ to be of compact support in $\w U$ away 
from $S\cap U$ and $\cC \cap U$. In particular, all target data vanishes in a neighborhood of the full timelike boundary of $\w U$ and in 
a neighborhood of the initial slice $\{t = 0\}$ away from $\cC \cap U$ and $S \cap U$ respectively. The same statements hold for variations of the 
target data, i.e.~in $T(\cT), T(\hat \cT)$ or $T(\cT^H)$. 

   For later reference, we note that the finite propagation speed property implies that solutions $h$ of the linear systems of wave equations on $\w U$ 
appearing in \S 3 and \S 4 then also have compact support in $\w U$ away from $S\cap U$ and $\cC \cap U$, for some definite (possibly small) time 
$t > 0$.

\subsection{Geometry at the corner.}
Next we discuss the form of the ambient metric $g$ at the corner $\Si$, determined by the compatibility conditions between $\cI$ and $\cB$ and 
the corner angle $\a$. 

Let $\tau = ((\g, \k, \nu), ([\s], \ell), \a)$ denote a general element in the target space $\cT$; thus $\tau$ represents general 
initial, boundary and corner data prescribing the geometric quantities $((g_S, K_S, \nu_S), ([g_{\cC}], H_{\cC}), \a_g)$. 

  The boundary condition $[g_{\cC}] = [\s]$ implies that there is a positive function $\f$  on $\cC$ such that 
\be \label{fconf}
g_{\cC} = \f^2 \s,
\ee
so only the conformal factor $\f$ is not determined by $[g_{\cC}]$. The compatibility between $g_S = \g$ and $g_{\cC} = \f^2 \s$ at $\Si$ 
immediately gives 
\be \label{C1}
\g |_{\Si} - \f^2 \s |_{\Si} = 0 ,
\ee
along $\Si$. Thus the conformal factor $\f$ is determined at $\Si$ by $\g$ and $\s$. Without loss of generality, we may assume that the induced 
volume forms $dv_{(\g_\Si)} = dv_{(\s_\Si)}$ agree, and hence 
\be \label{fS}
\f |_{\Si} = 1,
\ee
is determined at $\Si$. Thus, the full ambient metric $g = g_{\a\b}$ is determined at $\Si$ by initial and boundary data, except for the corner 
angle $\a = \<\nu_S, \nu_{\cC}\>$ in \eqref{angle}. It is clear that $\a$ is not determined by such data, and so must be added to the target data, as 
noted in \S 1. 

   Given an arbitrary choice of adapted local coordinates as in \S 2.1, the target metric $\s$ evaluated at $\Si$ has the 
general lapse-shift expression
\be \label{cm}
\s=-\rho^2 dt^2+w_Adtdx^A+\g_{AB},
\ee 
where $\g_{AB} = \g_{\Si}$. The scalar function $\rho$ and 1-form $w_A$ are thus given target data at $\Si$. The future-pointing  timelike unit 
normal vector of $\Sigma\subset (\cC,g_\cC)$ is then 
given by
\be \label{CShift}
T = \frac{\p_t-w^A\p_A}{\sqrt{\rho^2+|w|_\Si^2}}.
\ee
 Let $N=N^i\p_{x^i}$ denote the spacelike outward unit normal vector to the corner $\Si$ in $(S,g_S)$. Recall from \S 2.1 that the choice of 
 adapted coordinates $(t, x^i)$ requires specification of the corner angle $\a$ along $\Si$. Since $N$ is uniquely determined by the initial data 
 $g_S$, and since $x^1$ is a defining function for the corner $\Si \subset S$, $N^1\neq 0$. It is elementary to see that 
\be\label{cornerangle}
g(T,N) = - g(\nu_S, \nu_{\cC}) = -\a.
\ee

   In the $n+1$ or lapse-shift formalism, the spacetime metric $g$ evaluated on the initial surface $S$ is given by
\be\label{gcorner}
g=-\mu^2dt^2+X_idtdx^i+g_S.
\ee
Since $g_\cC$ is given by \eqref{cm} at $\Si$, by setting $x^1=0$ in the expression above, the compatibility condition at the corner 
$\Si$ gives 
\be \label{C1.5}
\mu=\rho,\ X_A=w_A \mbox{ on }\Si.
\ee 
It remains to calculate $X_1$ to fully determine $g_{\a\b}$ along the corner. Evaluating \eqref{gcorner} on the pair $(T, N)$ and using 
the equations \eqref{cornerangle} and \eqref{CShift} gives  
\bes
-\a=g(\tfrac{1}{\sqrt{\rho^2+|w|^2_\Si}}\p_t,N)
=\tfrac{1}{\sqrt{\rho^2+|w|^2_\Si}}X_i N^i.
\ees
Thus the component $X_1$ satisfies 
\bes
X_1=-\tfrac{1}{N^1}(\a\sqrt{\rho^2+|w|^2_\Si}+w_AN^A) \ \mbox{ on }\Si.
\ees
Since the full spacetime metric $g|_{\Si}$ is determined by the target data $\g, [\s], \a$ along the corner, the data $\nu$, which is used to prescribe 
the future-pointing timelike unit normal vector $\nu_S$, must satisfy the compatibility condition:
\be \label{C2}
\nu_S - \frac{1}{\sqrt{\mu^2 + \sum X_i^2}}(\p_t - X_i \p_i) = 0 \ \ {\rm on} \ \ \Si.
\ee
Here as always, $(t, x^i)$ are adapted local coordinates. The conditions  \eqref{C1}-\eqref{fS}, \eqref{C1.5}, and \eqref{C2} are the $C^0$ compatibility 
conditions for $\hat \cT$. 

\medskip

Regarding the first order behavior, the second fundamental form of $S$ is given by  
\be\label{pt-K}
\k = \frac{1}{2}\cL_{\nu_S} g|_S=\frac{1}{2}\cL_{\frac{\p_t-X}{\sqrt{\mu^2+|X|^2}}}g|_S=\frac{1}{2\sqrt{\mu^2+|X|^2}}(\cL_{\p_{t}}g|_S-\cL_{X}\g).
\ee
Hence $\p_t g_{ij}$, $(i, j =1, \dots, n)$ are determined by the initial data $(\g, \k)$, the boundary data $[\s]$ and the corner angle $\a$. 
 
 Next we observe that the initial velocity of the conformal factor $\f$, where $g_{\cC} = \f^2 \s$, is determined by the corner angle as well as the initial and 
boundary data. 

\begin{lemma} \label{cornerlemma}
One has the relation
\be \label{velocity}
(n-1)T(\f) = H_{\s} + \a H_{\Si} - \sqrt{1+\a^2}\tr_\Si K_S, 
\ee 
where $H_\s$ is the mean curvature of $\Si\subset (\cC,\s)$, $H_{\Si}$ is the mean curvature of $\Si \subset (S, \g)$ and 
$\tr_{\Si}K_S = \tr_{\g}\k$. In particular, all the terms on the right side of \eqref{velocity} are determined by the initial, boundary and 
corner data $(\g, \k), [\s], \a$. 
\end{lemma}

\begin{proof}

Let $e_A$, ($A = 2, \dots, n$) be a local orthonormal basis for $T\Si$. The well-known transformation rule for the Levi-Civita connection of 
conformally related metrics $g_\cC=\f^2\s$ states that 
\bes
\nabla^{g_{\cC}}_{e_A}T = \nabla^\s_{e_A}T + e_A(\ln \f)T - T(\ln \f)e_A,
\ees
and hence on $\Si$,  
\bes
\begin{split}
g(\nabla_{e_A}T, e_A) = \s(\nabla^\s_{e_A}T + e_A(\ln \f)T - T(\ln \f)e_A, e_A) = H_\s - (n-1)T(\f) .
\end{split}
\ees
On the other hand $T = \sqrt{1+\a^2}\nu_S-\a N$, so that 
\bes
\begin{split}
g(\nabla_{e_A}T,e_A)=g(\nabla_{e_A}[\sqrt{1+\a^2}\nu_S-\a N], e_A)=\sqrt{1+\a^2}\tr_\Si K_S - \a H_\Si,
\end{split}
\ees
which gives \eqref{velocity}. 
\end{proof}

The linearization of \eqref{velocity} will be important in the linear analysis in \S 3, cf.~Lemma \ref{u}. Note that the mean curvature $H_{\cC}$ does not enter 
the expression \eqref{velocity}.

Furthermore, a similar calculation can be used to determine the $t$-derivative of $\s$ at $\Si$. Namely, on restriction to $\Si$, we have  
$$\cL_{T^{\cC}}(\f^2 \s) = \cL_{T^{\cC}}g = \cL_{(\sqrt{1+\a^2}\nu_S - \a N)}g = \sqrt{1+\a^2}\k - \a B_{\Si},$$
where $B_{\Si}$ is the second fundamental form of $\Si \subset (S,\g)$. Since $\cL_{T}(\f^2 \s) = 2\f T(\f)\s + \f^2\cL_{T}\s$, it 
follows that, on $\Si$, 
\be \label{C3}
\cL_{T} \s + 2T(\f) \s - \sqrt{1+\a^2}\k  + \a B_{\Si} = 0,
\ee 
where $T(\f)$ is given by \eqref{velocity}. This is the first order compatibility condition on the data $(\g,\k), [\s], \a$.

   We summarize the analysis above in the following Lemma. 
\begin{lemma}\label{corner}
The 0 and 1 jets of $g_{\a\b}$ are all determined by the initial, boundary and corner data of $g$ in $\cT$ except for the components 
$\p_t g_{01}, \p_1 g_{0\alpha}$ on $\Si$.
\end{lemma}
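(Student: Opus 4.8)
The plan is to work in the adapted coordinates of \S2.1 and to exploit the $n+1$ split \eqref{gcorner}, which separates the purely spatial block $g_{ij}=(g_S)_{ij}$ ($i,j=1,\dots,n$), prescribed as the initial metric $\g$, from the lapse--shift block $g_{00}=-\mu^2$, $g_{0i}=X_i$, which carries the gauge information. First I would record that the $0$-jet of $g_{\a\b}$ along $\Si$ is already pinned down by the corner computation preceding the lemma: $\mu=\rho$ and $X_A=w_A$ by \eqref{C1.5}, while $X_1$ is fixed by the corner-angle formula obtained from \eqref{cornerangle} and \eqref{CShift}. Hence $g_{\a\b}|_\Si$ is a known function of the data $(\g)_S,(\s)_\cC,(\a)_\Si$, so that every tangential derivative $\p_A g_{\a\b}$ is determined merely by differentiating this known $0$-jet along $\Si$. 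It then remains to analyze the two transverse directions $\p_1$ (normal to $\cC$ inside $S$) and $\p_t$ (off $S$).

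For the $\p_1$ direction the spatial components are immediate: since $\g$ is prescribed on all of $S$, the derivative $\p_1 g_{ij}=\p_1\g_{ij}$ is determined along $\Si$, giving $\p_1 g_{11},\p_1 g_{1A},\p_1 g_{AB}$. By contrast, the components $\p_1 g_{0\a}$ are normal-to-$\cC$ derivatives of the lapse and shift; these cannot be read off from $\g$, and neither the conformal class $[\s]$ nor the mean curvature $H_\cC$ (both intrinsic to $\cC$) nor the corner angle constrain the transverse variation of the lapse--shift off $\cC$. This isolates the first exceptional set $\p_1 g_{0\a}$.

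For the $\p_t$ direction I would argue component by component, according to which geometric datum ``sees'' it. The spatial block $\p_t g_{ij}$ is recovered from the prescribed second fundamental form via \eqref{pt-K}, once the lapse $\mu$, shift $X$ and spatial metric $\g$ are inserted; this fixes $\p_t g_{11},\p_t g_{1A},\p_t g_{AB}$. The components $\p_t g_{00}$ and $\p_t g_{0A}$ are $t$-derivatives of the $\cC$-tangential (boundary) metric $g_\cC$, and these are supplied by its evolution along $T^\cC$: computing $\cL_{T^\cC}(g_\cC)$ via \eqref{velocity} and \eqref{C3}, and subtracting the tangential ($\p_A$) part of $T^\cC$ which is already known, expresses them through $\k$, the corner angle $\a$, and the second fundamental form $B_\Si$ of $\Si\subset(S,\g)$, all of which are data. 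The single remaining component is $\p_t g_{01}$: the entry $g_{01}$ involves the $x^1$-direction, so it is not part of $g_\cC$ and is invisible to \eqref{C3}; being a lapse--shift entry rather than a spatial one, it also does not appear in \eqref{pt-K}. Thus $\p_t g_{01}$ is constrained by neither the second-fundamental-form data on $S$ nor the induced-metric data on $\cC$, which identifies the second exceptional component.

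The main obstacle, and the point requiring the most care, is the bookkeeping that shows these two sets---and exactly these---survive. Concretely, one must verify that $g_{01}$ is the unique lapse--shift entry simultaneously absent from the spatial block governed by $\k$ and from the boundary block $g_\cC$ governed by $[\s]$, and that the $\p_1$-transverse derivatives of the time components genuinely escape all of the prescribed corner, initial and boundary data; one must also cleanly separate the genuinely-$\p_t$ content of $\cL_{T^\cC}g_\cC$ from its tangential part. It is here, too, that the corner angle $\a$ enters essentially: in \eqref{velocity} the quantity $T^\cC(\f)$, and hence the $t$-evolution of the boundary metric used to fix $\p_t g_{00},\p_t g_{0A}$, depends explicitly on $\a$, so that without the corner datum even these determined components would be unavailable. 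Collecting the determined derivatives together with the two exceptional sets then yields the statement.
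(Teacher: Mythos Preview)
Your proposal is essentially correct and follows the same route as the paper, which presents the lemma as a summary of the preceding computation in \S2.2. One small imprecision: you invoke \eqref{C3} to recover $\p_t g_{00}$ and $\p_t g_{0A}$, but \eqref{C3} is the compatibility condition on the $\Si$-tangential (i.e.\ $AB$) block of $\cL_{T^\cC}g_\cC$, not a determination of the $00,0A$ components. Those follow more directly: since $g_\cC=\f^2\s$ with $\s$ prescribed on all of $\cC$, the derivative $\p_t\s$ at $\Si$ is already part of the boundary data; combining this with $\f|_\Si=1$ and $T^\cC(\f)$ from \eqref{velocity} yields $\p_t(g_\cC)_{00}$ and $\p_t(g_\cC)_{0A}$. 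Your identification of the exceptional components $\p_t g_{01}$ and $\p_1 g_{0\a}$ is correct.
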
 

  The same result holds for infinitesimal deformations $h$ of $g$. Lemma \ref{corner} does not depend on any choice of gauge or 
adapted local coordinates for $g$. Geometrically, the terms $\p_1 g_{0i}$, $i = 0,2, \dots, n$ correspond to the component $A(T, \cdot)$ 
of the $2^{\rm nd}$ fundamental form $A = A_{\cC}$ of $\cC$ in $(M, g)$. The remaining two terms $\p_t g_{01}$ and $\p_1 g_{01}$ correspond 
to the time and radial variation of the corner angle (in adapted coordinates). We note that some of these remaining terms are determined on 
$\Si$ by a choice of normal vector $\nu_S$ and gauge $V$; cf.~Lemma \ref{hS}.

\subsection{Harmonic gauge.}

To prove well-posedness or solvability, it is well-known that it is necessary to work with a specific choice of gauge field or local slice to 
the action of the diffeomorphism group $\Diff_0(M)$ acting on the space of solutions. Such a choice determines, either directly or indirectly, 
a choice of space-time or lapse-shift decomposition of the ambient or bulk metric. In addition to the choice of gauge field in the bulk, the choice of 
its boundary conditions at the boundary $\cC$, in relation to the boundary conditions for the metric $g$, plays an important role in the well-posedness 
problem.  

As usual with the Cauchy problem for the Einstein equations, we work below with the (generalized) harmonic or wave coordinate gauge, described 
locally by the coordinate vector field    
\be \label{harm}
V = V_g = (\Box_g x^{\mu})\p_{x^{\mu}},
\ee
where $x^{\mu}$ are (fixed) adapted local coordinates, so $t = x^0$ and $x^1$ are defining functions for $S$ and $\cC$ respectively. 
We emphasize that, given a choice of adapted local coordinates, the gauge field $V_g$ is uniquely determined by $g$. 
We then form the harmonic ``gauge reduced map" $\Phi^H$ by considering, as in \eqref{PhiH}  
\be \label{PhiH1}
\Phi^H: Met(M) \to \cT^H,
\ee
$$\Phi^H(g) = (\Ric_g + \d_g^* V_g, (g_S, K_g, \nu_S, V_g|_S), ([g_{\cC}], H_{\cC}, V_g|_{\cC}), \a_g).$$ 
Note that we are thus imposing Dirichlet boundary conditions for the gauge field on $\cC$, (in addition to Dirichlet initial 
conditions on $S$). One might consider other boundary conditions for $V_g$, but we will not pursue that here. 
  
  As mentioned in the Introduction, we note that it is straightforward to globalize \eqref{harm} by replacing the local 
expression for $V$ by a wave map to Riemannian target space. The proof that the choice of $V$ does lead to a suitable choice of gauge is 
given in Remark \ref{goodg}. 

\medskip 

  The second and higher order compatibility conditions at the corner involve coupling of the bulk term $Q = \Ric_g$ of the target 
space with initial and boundary data and are derived by the same process as above. For the linear theory discussed here, it will not be 
necessary to specify these higher order conditions along $\Si$ explicitly. We refer then to the first part \cite{I} of this series for the 
detailed form of the higher order compatibility conditions, where they are needed for the nonlinear theory. (These are done in \cite{II} 
for Dirichlet boundary conditions, but there are only minor differences with the $\cB_C$ boundary conditions). As a consequence of 
that analysis, it follows that $\cT^H$ is a smooth Frechet manifold, as is $Met(M)$ (and also $\cE$ as shown in \cite{II}). 

   We thus have the three maps $\Phi$, $\hat \Phi$ and $\Phi^H$ in \eqref{Phi}, \eqref{Phi2}, \eqref{PhiH} or \eqref{PhiH1}. Most all of the analysis 
to follow is concerned with the gauge-fixed map $\Phi^H$. After developing a detailed understanding of the linearization of $\Phi^H$, 
it is then quite straightforward to derive a similar understanding for the geometric maps $\hat \Phi$ and $\Phi$.

\subsection{Function spaces.}

  We with work with the function spaces naturally associated to energy estimates for wave-type equations. Thus let $D$ denote a domain given as either $M$, $S$, 
$\cC$, $\Si$ or the corresponding $t$-level sets $S_t$, $\Si_t$, or the corresponding $t$-sublevel sets, $M_t$, $\cC_t$; ($M_t = \{p \in M: t(p) \leq t\}$). Define the 
Sobolev $H^s$ norm on functions on $D$ by 
 $$||v||_{H^s(D)}^2 = \sum_{k=0}^s \int_{D} |\p_D^k v|^2dv_D,$$
where $\p_D^s$ consists of all partial coordinate derivatives tangent to $D$ of order $\leq s$. In place of coordinate derivatives, one may use the 
components of covariant derivatives of $v$ up to order $s$, given a (background) Riemannian metric $g_D$ on $D$. The volume form $dv_D$ is also 
induced from the metric $g_{D}$.  

  Define the stronger $\bar H^s$ norm by including all space-time derivatives up to order $s$, so that 
\be \label{barnorm}  
||v||_{\bar H^s(D)}^2 = \sum_{k=0}^s \int_{D} |\p_M^k v|^2dv_D,
\ee
where $\p_M$ denotes partial derivatives along all coordinates of $M$ at $D$. 
 
  Finally, for the Cauchy slices $S_t$, define the boundary stable $H^s$ norm on $S_t$ by 
\be \label{bsnorm}
||v||_{\bar \cH^s(S_t)}^2 = ||v||_{\bar H^s(S_t)}^2 + ||v||_{\bar H^s(\cC_t)}^2.
\ee
  
  We will use the function space  
\be \label{N}
\cN^s(M) = C^0(I, \bar \cH^{s}(S)),
\ee
for functions on $M = M_1$. With the usual $C^0$ norm in $I$, the space $\cN^s$, with associated norm $|| \cdot ||_{\cN^s}$ is a 
separable Banach space. Norms such as \eqref{N} are commonly used for the space of solutions of wave-type equations, 
particularly regarding the Cauchy problem. Note however the use of the weaker $\bar H^s$ norm \eqref{barnorm} in \eqref{N} in 
place of the stronger boundary stable norm $\w \cH^s$ in \eqref{bsnorm}. 

 It is easy to see that 
$$H^{s+1}(M) \subset \cN^s(M) \subset H^s(M).$$
We will always assume $s$ is large, and in particular $s > \frac{n+1}{2} + 2$, so that the components of $g$ are at least $C^2$.

 \section{Linearized Equations} 
 
   In this section, we begin the analysis of the derivative $D\Phi^H$ of the map $\Phi^H$ in \eqref{PhiH1}. 
\be \label{DPhiH}
D\Phi^H: T(Met(M)) \to T(\cT^H),
\ee
$$D\Phi_g^H(h) = ((\Ric_g + \d_g^* V_g)'_h, (h_S, K'_h, (\nu_S)'_h, V'_h), (([g_{\cC}]'_h, H'_{h}, V'_h), \a'_h).$$    
We will use the discussion here to analyse the surjectivity and injectivity of $D\Phi^H$ in \S 4 and \S 5. 
  
   All of the results of this section are valid either globally, for $M$ as in \S 1, or locally, for localized corner neighborhoods $U \subset \w U$ as 
described in \S 2.1. In the localized setting, to simplify notation, we will always denote $\w g$ simply by $g$. We assume all data is $C^{\infty}$ smooth. 

  For later purposes, we recall that in the local setting of \S 2.1, all derivatives $\p g$ of $g$ in adapted local coordinates, in particular the 
$2^{\rm nd}$ fundamental form $A$ of $\cC$ in $(M, g)$, are $O(\e)$ in $C^{\infty}$. Further, again as discussed in \S 2.1, all variations 
$\tau'$ of the target data in $\hat \cT$ or $\cT^H$ are assumed to have compact support in $\w U$ away from $S\cap U$ and $\cC \cap U$.
 
 \subsection{Bulk equations.}

To begin, consider the bulk term in \eqref{DPhiH}, i.e.~the equation
 \be \label{L}
L(h) = (\Ric_g + \d_g^*V_g)'_h = F. 
 \ee
 The first term $\Ric'_h$ has the form, cf.~\cite{B}, 
$$\Ric'_h = \tfrac{1}{2}D^*D h +\Ric \circ h - Rm(h) - \d^*\b h.$$
Here and in the following, all geometric tensors and operators are with respect to $g$, so we omit the subscript $g$. 
Recall also that $\b$ is the Bianchi operator. Since $\d^* V = \frac{1}{2}\cL_V g$, $2(\d^*V)'_h = \cL_V h + 2\d^* V'_h  = \nabla_V h + 
2\d^* V \circ h + 2\d^*V'_h$. Also $V = \Box x^{\a} \p_{\a}$ so that $V'_h = (\Box)'_h x^{\a}\p_{\a}$ and a standard formula, (cf.~\cite{B}), 
gives $(\Box)'_h x^{\a} = -\<D^2 x^{\a}, h\> + \<\b h, dx^{\a}\>$, so that 
 \be \label{V'h}
 V'_h = \b h - \<D^2 x^{\cdot}, h\>\p_{x^{\cdot}}. 
 \ee
 The first term here gives rise to a cancellation with the term $-\d^*\b h$ above, which then gives 
$$(\Ric_g + \d^*V)'_h = \tfrac{1}{2}D^*D h + \Ric \circ h - Rm(h) + \tfrac{1}{2}\nabla_V h + \d^* V \circ h - \d^* (\<D^2 x^{\a}, h\> \p_{\a}) .$$
The leading order term here is the tensor wave operator $D^*D$; the zero order terms in $h$ have coefficients involving two derivatives of 
$g$ while the first order terms in $h$ have coefficients involving one derivative of $g$. The operator $D^*D$ operating on symmetric bilinear 
forms $h$ may be written as a coupled system of scalar equations for the components $h_{\a\b}$ of $h$ of the form 
$$(D^*Dh)_{\a\b} = -\Box_g h_{\a\b} + S_{\a\b}(h),$$
where $S(h)$ has lower order terms as above. We thus write the linearization \eqref{L} in the form 
\be \label{L2}
(L(h))_{\a\b} = -\tfrac{1}{2}\Box_g h_{\a\b} + P_{\a\b}(\p h) = F_{\a\b},
\ee
where $P$ is the collection of lower order - zero and first order - terms in $h$. The first order operator $P$ is linear in $h$, 
but is highly coupled. In the localized setting of \S 2.1, $P$ is a small perturbation term; by \eqref{eps}, the coefficients of $P$ are all $O(\e)$.

\medskip 

We note the following Lemma for the initial data for \eqref{L2} for future reference. 

\begin{lemma}\label{hS}
In adapted local coordinates along $\Si$, the initial data 
$$(h_{\a\b}, \p_t h_{\a\b}) \  {\rm on} \  S,$$
are uniquely determined by the target initial data $(h_S, K'_h, (\nu_S)'_h, V'_h)$, and corner data $\a'_h$ in \eqref{DPhiH}. The converse also holds. 
\end{lemma}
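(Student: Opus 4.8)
The plan is to prove the equivalence by reconstructing the Cauchy data $(h_{\a\b},\p_t h_{\a\b})$ on $S$ from the target data, working order by order in the normal direction and mirroring at the linearized level the $0$- and $1$-jet computations of \S 2.2. The forward direction is immediate, since each of $h_S$, $K'_h$, $(\nu_S)'_h$, $V'_h$ and $\a'_h$ is by construction an explicit linear expression in $(h_{\a\b},\p_t h_{\a\b})$ with coefficients built from $g$; thus the content lies in the converse, namely that these data also determine the Cauchy data, with all identifications depending smoothly on $g$.

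For the $0$-jet I would first read off the spatial components $h_{ij}=(h_S)_{ij}$ directly from $h_S=(g_S)'_h$. The lapse-shift components $h_{0\a}$ are then recovered from the normal variation $(\nu_S)'_h$ by linearizing the two defining properties of the future timelike unit normal: orthogonality $g(\nu_S,\p_j)=0$ and normalization $g(\nu_S,\nu_S)=-1$. These linearize to $h_{\a j}\nu_S^\a=-g((\nu_S)'_h,\p_j)$ and $h_{\a\b}\nu_S^\a\nu_S^\b=-2g(\nu_S,(\nu_S)'_h)$; since $\nu_S^0\neq 0$, the first set determines the shift variations $h_{0j}$ from the already-known $h_{ij}$, and the second determines the lapse variation $h_{00}$. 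Along $\Si$ this is matched to the corner geometry: the identification $g_{01}|_\Si\leftrightarrow\a$ established in \S 2.2 shows that the remaining freedom $h_{01}|_\Si$ is exactly the one fixed by the prescribed corner-angle variation $\a'_h$, consistently with Lemma \ref{corner}.

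For the $1$-jet, the spatial time-derivatives $\p_t h_{ij}$ are obtained from $K'_h$ by linearizing \eqref{pt-K}: the leading contribution is $\tfrac{1}{2\sqrt{\mu^2+|X|^2}}\p_t h_{ij}$ and every other term involves only the $0$-jet data already recovered, so $K'_h$ determines $\p_t h_{ij}$. The time-derivatives of the lapse-shift $\p_t h_{0\a}$ are recovered from the gauge variation $V'_h$; in particular this fixes $\p_t h_{01}$, the component the purely geometric data leaves free (cf.\ Lemma \ref{corner}). Using \eqref{V'h}, $V'_h=\b h-\<D^2 x^{\cdot},h\>\p_{x^{\cdot}}$, the dependence of $V'_h$ on $\p_t h_{0\nu}$ is linear, and I would compute its coefficient matrix at the flat model $g_{\a_0}$, where the connection term drops out and the map is diagonal with entries proportional to $g^{00}\neq 0$, hence invertible. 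By the $C^\infty$-closeness estimate \eqref{eps} in the localized setting of \S 2.1, invertibility persists for the backgrounds under consideration, yielding $\p_t h_{0\a}$ and completing the reconstruction; the converse then recovers all the target data, including $\a'_h$.

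The step I expect to be the main obstacle is the last one: verifying that the harmonic gauge field genuinely detects all of the lapse-shift time-derivatives, i.e.\ that the linear map $\p_t h_{0\nu}\mapsto V'_h|_S$ is invertible. This is exactly where the specific choice of wave gauge, together with the Dirichlet prescription of $V$ along $S$, is used in an essential way; everything else reduces to linear algebra in the $n+1$ decomposition. I would handle it through the explicit flat-model computation and the perturbation argument above rather than by any abstract device.
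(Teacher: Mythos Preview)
Your argument is essentially correct, but it differs from the paper's in structure. The paper first establishes the \emph{nonlinear} statement: the corner angle $\a$ fixes the choice of adapted coordinates (up to the natural equivalence), after which $g_S$ gives $g_{ij}$, the lapse--shift associated to $\nu_S$ gives $g_{0\a}$, $K_S$ gives $\p_t g_{ij}$, and $V|_S=\Box_g x^\a|_S$ gives $\p_t g_{0\a}$. It then linearizes, decomposing $\tfrac{d}{ds}(g+sh)_{\a_s\b_s}$ into the variation of $g$ in fixed coordinates plus the change-of-basis term coming from the $\a'$-induced variation of the adapted chart. You instead work in fixed coordinates from the start and recover $h_{0\a}$ directly from the linearized unit-normal conditions for $(\nu_S)'_h$, then $\p_t h_{0\a}$ from $V'_h$ via the explicit flat-model invertibility plus perturbation. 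Your route is more elementary and avoids tracking the coordinate variation; the paper's route makes the role of $\a'$ as coordinate-fixing data more transparent.

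One point to clean up: in your approach the corner-angle variation $\a'_h$ is \emph{not} needed to fix any ``remaining freedom'' in $h_{01}|_\Si$, since your linearized orthogonality and normalization equations already determine all of $h_{0\a}$ from $(\nu_S)'_h$ and $h_{ij}$. In your setting $\a'_h$ is recovered a posteriori from the compatibility condition \eqref{C2} at $\Si$ (this is where the converse direction picks it up), rather than serving as independent input. The sentence claiming $\a'_h$ fixes $h_{01}|_\Si$ should be replaced by the observation that $\a'_h$ is consistent with, and determined by, the data $(h_S,(\nu_S)'_h)$ at $\Si$; this is not a gap in the argument, just a misattribution of roles.
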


\begin{proof} 
We show first that the nonlinear version of this result holds, so with $g$ in place of its variation $h$. First, the determination of the corner angle 
$\a$ along $\Si$ determines a choice of adapted local coordinates, up to the natural equivalence relation, cf.~\S 2.1. Having then fixed the 
coordinates through $\a$, the target initial data for $g$ along $S$ are then $(g_S, K_S, \nu_S, V|_S)$. In such local coordinates, $g_S$ determines 
$g_{ij}$, $i, j = 1,\dots, n$. Also, since $\nu_S$ and the coordinate $t$ are given, the lapse-shift $(\mu, X)$ with $\nu_S = \mu \p_t + X$ are given, 
and so the components $g_{0\a}$ are also determined. Similarly, $K_S$ then determines $\p_t g_{ij}$. Finally, it is easy to see that from these prior 
determinations, $(V_S)^{\a} = \Box_g x^{\a}|_S$ determines $\p_t g_{0\a}$ on $S$. 

  We may apply the arguments above to the linearization $h_{\a\b} = \frac{d}{ds}(g + sh)_{\a_s\b_s}$. The linearization is a sum of two terms. 
The first is the linearization $h$ of $g$ in fixed coordinates and the second is given by the components of $g$ in the infinitesimal 
variation of the coordinates (the variation of the change of basis). The sum these two terms is determined by the argument above, while the 
variation of the coordinates is determined up to equivalence by $\a'$. 

\end{proof}

\subsection{Gauge equations.}

 We next discuss the gauge field $V = V_g$ and its linearization $V'_h$ at $g$. Let 
\be \label{V0}
\Ric_g + \d^*V = Q,
\ee
so $Q$ is determined by the target data in $\cT^H$.  

\begin{lemma}\label{Gauge-lemma}
Let $\Box_g$ denote the wave operator $\Box_g = - D^*D$ acting on vector fields $V$ on $M$. Then  
\be \label{V}
-\tfrac{1}{2}[\Box_g + \Ric_g](V) = \b_g Q.
\ee
The initial data $V|_S$, $(\p_t V)|_S$ and boundary data $V|_{\cC}$ are determined by the target data  
$$(\Ric + \d^*V, (g_S, K_S, \nu_S, V|_S)_S, V|_{\cC})$$
in $\cT^H$ and hence $V$ on $M$ is uniquely determined by target data. 

\end{lemma}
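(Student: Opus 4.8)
The plan is to prove the two assertions in turn: the identity \eqref{V}, which is formal, and the determination of the Cauchy and boundary data for $V$, after which unique solvability is a standard fact about the wave operator.

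\emph{The equation \eqref{V}.} First I would apply the Bianchi operator $\b_g$ to the definition $Q = \Ric_g + \d_g^* V$. The contracted second Bianchi identity gives $\b_g \Ric_g = 0$, so $\b_g Q = \b_g \d_g^* V$, and it remains to verify the Weitzenb\"ock-type identity
\be
\b_g \d_g^* V = -\tfrac12 (\Box_g + \Ric_g) V
\ee
on vector fields (equivalently $1$-forms). This is the routine computation: writing $(\d^* V)_{\a\b} = \tfrac12(\nabla_\a V_\b + \nabla_\b V_\a)$, one evaluates $\d\d^* V$ by commuting the two covariant derivatives, which produces a curvature term $\Ric_g(V)$ together with a gradient $\tfrac12\nabla(\mathrm{div}\,V)$ of the divergence; since $\tr_g \d^* V = \mathrm{div}\,V$, the term $\tfrac12 d\,\tr$ in $\b_g = \d_g + \tfrac12 d\,\tr$ cancels this gradient exactly, leaving $-\tfrac12(\Box_g + \Ric_g)V$. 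Combining with $\b_g Q = \b_g \d^* V$ yields \eqref{V}.

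\emph{The data for $V$.} The values $V|_S$ and $V|_\cC$ appear verbatim among the target data, so there is nothing to check for them; the whole content is the velocity $\p_t V|_S$. Here I would use the universal expression $V^\mu = \Box_g x^\mu = -g^{\a\b}\Gamma^\mu_{\a\b}$, which is linear in the $1$-jet of $g$; hence $\p_t V^\mu|_S$ is a universal expression in the $2$-jet of $g$ along $S$, and it suffices to show this $2$-jet is determined by target data. By the (nonlinear form of the) argument in Lemma \ref{hS}, the $1$-jet $(g_{\a\b}, \p_t g_{\a\b})|_S$ is already fixed by $(g_S, K_S, \nu_S, V|_S)_S$; differentiating in the tangential directions then determines all second derivatives $\p_i \p_j g$ and $\p_i \p_t g$ on $S$. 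The one missing piece is the transverse datum $\p_t^2 g|_S$, and this is precisely what the bulk term supplies: because $V$ is the harmonic gauge field, the combination $Q = \Ric_g + \d^* V$ has principal part $-\tfrac12 \Box_g g_{\a\b}$ (cf.~\eqref{L2}), so that $-\tfrac12 g^{00}\,\p_t^2 g_{\a\b}$ equals $Q_{\a\b}$ modulo terms involving only the $\le 1$-jet of $g$ and its tangential derivatives along $S$, all of which are already known. Since $t$ is a time function we have $g^{00}\neq 0$, so $\p_t^2 g_{\a\b}|_S$ is solved for. Thus the full $2$-jet of $g$ along $S$, and with it $\p_t V|_S$, is determined by the target data.

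\emph{Uniqueness, and the main obstacle.} Finally, \eqref{V} is a linear second-order hyperbolic system for $V$ whose coefficients are built from the fixed background $g$ and whose source $\b_g Q$ is target data. Given the Cauchy data $(V|_S, \p_t V|_S)$ and the Dirichlet data $V|_\cC$, all shown above to be target data, uniqueness for the IBVP of the wave operator $\Box_g + \Ric_g$ identifies $V = V_g$ as the unique solution. The one genuinely non-formal point, and the step I expect to carry the weight of the argument, is the determination of $\p_t V|_S$: a priori it requires the transverse second derivative $\p_t^2 g$, which is not contained in the prescribed $1$-jet on $S$, and the key observation is that exactly this transverse derivative is recovered from the bulk datum $Q$ through the harmonic-gauge cancellation that makes $\Ric_g + \d^* V$ have $\Box_g$ as its principal symbol.
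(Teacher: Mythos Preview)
Your derivation of \eqref{V} via the Bianchi identity and the Weitzenb\"ock formula, and your final appeal to uniqueness for the Dirichlet IBVP, match the paper's proof exactly. Your argument for the velocity $\p_t V|_S$, however, takes a genuinely different route. The paper does not reconstruct the full $2$-jet of $g$ along $S$; instead it invokes the Gauss and Gauss--Codazzi identities \eqref{Gauss}--\eqref{GC} to observe that the Einstein tensor component $E_g(\nu_S,\cdot)$ is determined by the intrinsic initial data $(g_S,K_S)$ alone, so that from $Q=\Ric_g+\d^*V$ one reads off $\d^*V(\nu_S,\cdot)-\tfrac12(\mathrm{div}\,V)g(\nu_S,\cdot)$ on $S$, and hence $\nabla_{\nu_S}V$ since $V|_S$ is already given. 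This uses only the normal component $Q(\nu_S,\cdot)|_S$ of the bulk datum and highlights the constraint structure; it also transfers verbatim to the linearized Lemma \ref{Gauge-lemma2}. Your approach instead uses all of $Q|_S$ to solve for $\p_t^2 g_{\a\b}|_S$ via the hyperbolic principal part of $\Ric_g+\d^*V_g$, then recovers $\p_t V$ from the resulting $2$-jet; this is equally valid and arguably more direct, but it obscures the role of the constraints and uses more of the target data than is strictly necessary.
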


\begin{proof}
Applying the Bianchi operator $\b_g = \d_g + \frac{1}{2}d \tr_g$ to \eqref{V0} gives 
\be \label{V2}
\b_g \Ric_g + \b_g \d^*V = \b_g Q,
\ee
which, by the Bianchi identity $\b_g \Ric_g = 0$, gives \eqref{V} via a standard Weitzenbock formula 

The Dirichlet data for $V$ along $S$ and $\cC$ are target data in $\cT^H$. We claim that the $t$-derivative $\p_t V$ on $S$ is also 
determined by target data. This follows from the Gauss and Gauss-Codazzi identities \eqref{Gauss}-\eqref{GC}. For $\nu = \nu_S$ the 
unit timelike normal to $S$, these identities show that the form $E(\nu, \cdot) = \Ric(\nu, \cdot) - \frac{1}{2}R g(\nu, \cdot)$ is determined by 
initial data $\iota = (g_S, K_S)$ along $S$. By \eqref{V0}, it follows that $\d^*V(\nu, \cdot) - \frac{1}{2}\tr \d^*V g(\nu, \cdot)$ is determined by 
initial data. Since $V$ is determined along $S$ by the target data, it follows easily that $\nabla_{\nu}V$ and hence $\p_t V$ is determined 
along $S$ by target data. It follows from the standard existence and uniqueness of solutions to the wave equation \eqref{V} with given initial 
and Dirichlet boundary data that $V$ is uniquely determined by target data. 

\end{proof}

 A similar result holds for the linearization $V'_h$ of the gauge field $V = V_g$. 
\begin{lemma}\label{Gauge-lemma2}
We have 
\be \label{V'}
-\tfrac{1}{2}(\Box_g + \Ric_g)V'_h + \tfrac{1}{2}\nabla_V V'_h = \b_g F + \b'_h \Ric_g + O_{2,1}(V,h),
\ee
where $O_{2,1}(V,h)$ is $2^{\rm nd}$ order in $V$ and $1^{\rm st}$ order in $h$. Moreover, $O_{2,1}(V,h) = 0$ if $V = 0$. 

  The initial data $(V'_h)|_S$, $(\p_t V'_h)|_S$ and boundary data $V'_h|_{\cC}$ are determined by the target data  
$$\big((Ric + \d^*V)'_h, (h_S, K'_h, (\nu_S)'_h, V'_h)_S, (V'_h)_{\cC}\big)$$
in $T(\cT^H)$.
\end{lemma}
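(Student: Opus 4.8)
The plan is to derive the wave equation \eqref{V'} by differentiating the bulk linearized Einstein equation and invoking the linearized second Bianchi identity, in direct parallel with the proof of Lemma \ref{Gauge-lemma}. Starting from the bulk relation $F = (\Ric_g + \d^* V)'_h$ of \eqref{L}, I would apply the Bianchi operator $\b_g$ to both sides. On the Ricci term, differentiating the Bianchi identity $\b_g \Ric_g = 0$ gives the linearized identity $\b_g \Ric'_h = -\b'_h \Ric_g$, so that $\b_g F = -\b'_h \Ric_g + \b_g (\d^* V)'_h$. For the gauge term I would use the pointwise decomposition $(\d^* V)'_h = \d^* V'_h + \tfrac12\cL_V h$, recorded in the bulk computation preceding \eqref{L2}, which follows from $\d^*_g V = \tfrac12\cL_V g$. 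Applying the same Weitzenbock formula used in Lemma \ref{Gauge-lemma}, now to $V'_h$ in place of $V$, namely $\b_g \d^* V'_h = -\tfrac12(\Box_g + \Ric_g)V'_h$, then yields
\bes
-\tfrac12(\Box_g + \Ric_g)V'_h + \tfrac12 \b_g \cL_V h = \b_g F + \b'_h \Ric_g .
\ees

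The remaining step is to reorganize the term $\tfrac12\b_g \cL_V h$ into the advection term $\tfrac12\nabla_V V'_h$ appearing on the left of \eqref{V'} plus an error of the declared type, by setting $O_{2,1}(V,h) = \tfrac12\nabla_V V'_h - \tfrac12\b_g\cL_V h$. Here I would use \eqref{V'h}, which gives $V'_h = \b h - \<D^2 x^{\cdot}, h\>\p_{x^{\cdot}}$, so that the principal (second-order-in-$h$) part of $\nabla_V V'_h$ is $\nabla_V \b h$; likewise, writing $\cL_V h = \nabla_V h + 2\d^* V\circ h$ as in the bulk computation, the principal part of $\b_g\cL_V h$ is $\b_g \nabla_V h$. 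Since $\nabla_V$ and $\b_g$ are first-order operators, the difference $\nabla_V \b h - \b_g \nabla_V h$ is a commutator and hence drops two orders in the top symbol, leaving an expression that is only first order in $h$, with coefficients involving up to two derivatives of the background field $V$. Every term produced this way is manifestly proportional to $V$ and so vanishes identically when $V = 0$. Collecting these into $O_{2,1}(V,h)$ establishes \eqref{V'} with the stated structure; in particular, when $V = 0$ the equation reduces to the clean linear wave equation $-\tfrac12(\Box_g + \Ric_g)V'_h = \b_g F + \b'_h \Ric_g$.

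For the Cauchy data, the argument parallels the proof of Lemma \ref{Gauge-lemma} at the linearized level. The Dirichlet data $V'_h|_S$ and the boundary data $V'_h|_{\cC}$ are by construction components of the target data in $T(\cT^H)$, so nothing is needed for these. For $\p_t V'_h|_S$ I would linearize the Gauss and Gauss--Codazzi identities \eqref{Gauss}--\eqref{GC}: these express $\Ric_g(\nu,\cdot)$ and the combination $\tr_{g_S}\Ric_g - 3\Ric_g(\nu,\nu)$ through the intrinsic and extrinsic data along $S$, so their linearizations determine the corresponding components of $\Ric'_h$ in terms of $(h_S, K'_h, (\nu_S)'_h)$ --- the variation of the unit normal being exactly why $(\nu_S)'_h$ is carried in the target data. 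Substituting $\Ric'_h = F - (\d^* V)'_h$ together with $(\d^* V)'_h = \d^* V'_h + \tfrac12\cL_V h$, and using that all tangential derivatives of $V'_h$ along $S$ are already fixed by $V'_h|_S$, one recovers the normal derivative $\nabla_{\nu_S} V'_h$ and hence $\p_t V'_h|_S$ from target data. Standard existence and uniqueness for the wave equation \eqref{V'} with this Cauchy data and the Dirichlet boundary data $V'_h|_{\cC}$ then determines $V'_h$ on $M$ uniquely, exactly as in Lemma \ref{Gauge-lemma}.

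I expect the main obstacle to be the bookkeeping in the second paragraph: confirming that the second-order-in-$h$ parts of $\nabla_V V'_h$ and $\b_g\cL_V h$ cancel through the $[\nabla_V, \b_g]$ commutator, so that the residual fits the declared $O_{2,1}(V,h)$ structure (first order in $h$, second order in $V$), while simultaneously verifying that nothing survives when $V = 0$. Care is needed with the zeroth-order-in-$\p h$ contributions coming from the $2\d^* V\circ h$ part of $\cL_V h$ and from the $\<D^2 x^{\cdot}, h\>$ term in \eqref{V'h}. The linearized constraint computation determining $\p_t V'_h|_S$ is conceptually routine once $(\nu_S)'_h$ is available, but the interplay of the varying normal with the trace terms in \eqref{Gauss} warrants attention.
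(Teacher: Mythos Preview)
Your proposal is correct and follows essentially the same approach as the paper: apply $\b_g$ to the linearized bulk equation, invoke the linearized Bianchi identity and the Weitzenbock formula $\b_g\d^*V'_h = -\tfrac12(\Box_g+\Ric_g)V'_h$, then reorganize $\b_g[(\d^*)'_hV] = \tfrac12\b_g\cL_V h$ as $\tfrac12\nabla_V V'_h$ plus an $O_{2,1}(V,h)$ commutator term via \eqref{V'h}; for the Cauchy data, both you and the paper linearize the constraint equations \eqref{Gauss}--\eqref{GC} along $S$ to extract $\nabla_{\nu_S}V'_h$ from the target data. The only cosmetic difference is that the paper works with the Einstein-tensor combination $(\Ric-\tfrac12 Rg)(\nu_S)$ when unpacking the constraints, but the content is the same.
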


\begin{proof}
Again applying the Bianchi operator to both sides of \eqref{L} gives:
\bes
\b_g \Ric'_h+\b_g \d^* V'_h+\b_g[(\d^*)'_h V]=\b_g F
\ees
which via the Weitzenbock formula as before implies 
\be \label{V'1}
-\tfrac{1}{2}[\Box V'_h+\Ric_g(V'_h)]=\b_g F + \b'_h \Ric_g -\b_g[(\d^*)'_h V].
\ee
Simple calculation gives $(\d^*)'_hV = \frac{1}{2}\nabla_V h + \d^*V\circ h$, so that 
 $$\b[(\d^*)'_hV] = \tfrac{1}{2}\b(\nabla_Vh) + O_{2,1}(V,h) = \tfrac{1}{2}\nabla_V \b h + O_{2,1}(V,h) = \tfrac{1}{2}\nabla_V V'_h + O_{2,1}(V,h),$$
where we have used \eqref{V'h} in the last equality. This gives \eqref{V'} 

  As in Lemma \ref{Gauge-lemma}, the Dirichlet data for $V'_h$ along $S$ and $\cC$ are given as target space data and we use the 
constraint equations \eqref{Gauss}-\eqref{GC} to determine the initial velocity $\p_t V'_h$. As before, the bulk equation yields:
\bes
\begin{split}
&(\Ric-\tfrac{1}{2}Rg)'_h(\nu_S)+[\d^*V-\tfrac{1}{2}({\rm div}V)g]'_h(\nu_S)\\
&=[F-\tfrac{1}{2}(\tr F)g](\nu_S)+\tfrac{1}{2}\<h,\Ric_g+\d^* V_g\>g(\nu_S)-
\tfrac{1}{2}\tr(\Ric_g+\d^* V_g)h(\nu_S)
\end{split}\ees
and thus
\bes\begin{split}
&[\d^*V-\tfrac{1}{2}({\rm div}V)g]'_h(\nu_S)\\
&=-[(\Ric-\tfrac{1}{2}Rg)(\nu_S)]'_h+(\Ric-\tfrac{1}{2}Rg)((\nu_S)'_h)\\
&+[F-\tfrac{1}{2}(\tr F)g](\nu_S)+\tfrac{1}{2}\<h,\Ric_g+\d^* V_g\>g(\nu_S)-
\tfrac{1}{2}\tr(\Ric_g+\d^* V_g)h(\nu_S)
\end{split}\ees
on $S$.
By the constraint equations \eqref{Gauss}-\eqref{GC}, $[(\Ric-\tfrac{1}{2}Rg)(\nu_S)]'_h$ is given by 
\bes
\big(-\tfrac{1}{2}[|K_S|^2-(\tr_{g_S}K_S)^2+R_S]'_{(h_S,K'_h)_S},~[{\rm div}_{g_S}K_S-d_S(\tr_{g_S}K_S)]'_{(h_S,K'_h)_S}\big).
\ees
Thus the target data in $T(\cT^H)$ uniquely determine the vector field 
\bes
[\d^*V'_h](\nu_S)-\tfrac{1}{2}({\rm div}V'_h)g(\nu_S),
\ees
along $S$. Since the initial data of $V'_h$ is already determined, this uniquely determines the vector field 
\be
\nabla_{\nu_S} V'_h \ \ {\rm along} \ \ S.
\ee

\end{proof}

\begin{remark}\label{goodg}
{\rm Lemma \ref{Gauge-lemma} implies the standard result that solutions of the gauge-reduced Einstein equations 
$$\Ric_g + \d^*V = 0,$$
with target data $V = 0$ at $S\cup \cC$ satisfy $V = 0$ on $M$, and so are solutions of the vacuum Einstein equations 
$$\Ric_g = 0$$
on $M$. A similar result holds locally, under the compact support conditions as discussed in \S 2.1.  

  In the case that $\Ric_g = V = 0$, by Lemma \ref{Gauge-lemma2} the same result and proof as above holds for the linearized gauge 
  $V'_h$.   
  
  Similarly, we note that any metric $g$ can locally be brought into harmonic gauge by means of a diffeomorphism 
 $\f \in \Diff_0(\w U)$. To see this, given $g$ and an adapted local chart $x^{\a}$ for $g$, let $y^{\a}$ be harmonic or wave coordinates with 
 the same initial and boundary data as the given coordinates $x^{\a}$, 
 \be \label{ya}
 \Box_g y^{\a} = 0.
 \ee
 Since Dirichlet data are well-posed for the scalar wave equation, \eqref{ya} has a unique solution. The coordinate change 
 $y^{\a} = y^{\a}(x^{\b})$ defines a diffeomorphism $\f \in \Diff_0(\w U)$. Setting $\w g = \f^*g$, one has 
 $$\Box_{\w g}x^{\a} = \Box_g y^{\a} = 0.$$
 
   The same result holds in the linearized setting. 

}
\end{remark}

\subsection{Boundary equations.}

   The geometric boundary data for $h$ on $\cC$ are given by 
$$(\ring{h}, H'_h),$$
where $\ring{h} = [g_{\cC}]'_h$ denotes the variation of the conformal class $[g_{\cC}]$ and $H'_h$ is the variation of the mean curvature $H_{\cC} := H_g$ 
of $\cC$ in $(M, g)$. Let $h^{\tT}$ denote the restriction of $h$ to $\cC$, so $h^{\tT}$ gives the variation of the boundary metric $g_{\cC}$. Then $\ring{h}$ 
will be identified with the trace-free part of $h^{\tT}$, so 
$$\tr_{g_{\cC}}\ring{h} = 0,$$
giving $h^{\tT} = \ring{h} + (\frac{1}{n}\tr_{g_{\cC}}h^{\tT}) g_{\cC}$. 

  While $\ring{h}$ represents Dirichlet boundary data, $H'_h$ represents Neumann or a Dirichlet-Neumann mixture of Cauchy data at $\cC$. 
The undetermined Dirichlet data for $h$ at $\cC$ are thus the terms 
\be \label{uh}
u = \tfrac{1}{n}\tr_{\cC}h \ \ {\rm and} \ \ h(\nu_{\cC}, \cdot),
\ee
corresponding to the variation of the conformal factor and the variation of the normal vector $\nu_{\cC}$ at $\cC$ respectively. 
We note that the pair 
\be \label{conj}
(u, H'_h),
\ee
or their non-linear analogs $(\f, H)$ for $\f$ as in \eqref{fconf} are a canonical conjugate pair (up to a multiplicative constant) for the 
Einstein-Hilbert action with a natural boundary action, cf.~\eqref{EH} below. The term $h(\nu_{\cC}, \cdot)$ is gauge dependent, so should 
naturally be determined, or at least depend on the choice of gauge boundary condition at $\cC$. 

  By definition, 
 \be \label{nuC}
\nu_{\cC} = \frac{1}{|\nabla x^1|}\nabla x^1 =  \frac{1}{|\nabla x^1|}g^{\a1}\p_{\a}.
\ee
The expression \eqref{nuC} defines the extension of $\nu_{\cC}$ at $\cC$ into a neighborhood of $\cC$ in $(M, g)$. 
In the localized setting of \S 2.1, $\nu_{\cC}$ is close to its Minkowski corner value, given by \eqref{normals}. Since throughout this subsection we 
work only along the boundary $\cC$, we often denote $\nu_{\cC}$ simply by $\nu$. Thus, we write 
$$h(\nu, \cdot) = h(\nu)^{\tT} + h_{\nu \nu}\nu.$$ 
We will be careful to avoid any confusion with $\nu = \nu_S$ along $S$. 

\medskip 

   In the following, we derive equations for these remaining components 
$$u, h(\nu)^{\tT}, h_{\nu \nu}$$
along $\cC$.

  The boundary data in the target space $\cT^H$ consists of the data $([g_{\cC}], H_g, V_{\cC})$. The linearizations of these terms 
are given by 
\be \label{confb}
[g_{\cC}]'_h = \ring{h} = h^{\tT} - \frac{\tr_{\cC}h^{\tT}}{n}g_{\cC},
\ee
\be \label{mub}
(H_g)'_h = \tfrac{1}{2}\nu(\tr_g h) + \d(h(\nu)^{\tT}) - \tfrac{1}{2}h_{\nu\nu}H, 
\ee
\be \label{gauget}
(V'_h)^{\tT} = - \nabla_{\nu}h(\nu)^{\tT} + \d_{\cC}\ring{h}+ \tfrac{1}{2}d (h_{\nu \nu} + \tfrac{n-2}{n}u) - (A + H\g)h(\nu)^{\tT} - (\<D^2 x^{\a}, h\>\p_{\a})^{\tT},
\ee
\be \label{gaugen}
(V'_h)(\nu) = -\tfrac{1}{2}\nu(h_{\nu \nu}) + \tfrac{1}{2}\nu(\tr_{\cC}h^{\tT}) + \d_{\cC}(h(\nu)^{\tT}) - h_{\nu \nu}H + \<A, h^{\tT}\> - \<D^2 x^{\a}, h\>g_{\nu \a}. 
\ee 
Here the superscript $\tT$ denotes the component tangent to the boundary $\cC$. The derivations of \eqref{confb} and \eqref{mub} are elementary while 
the equations \eqref{gauget}-\eqref{gaugen} follow from the linearization of $V$ in \eqref{V'}. Namely, as noted in \cite{I}, straightforward 
computation from \eqref{V'h} gives 
$$(V'_h)^{\tT} = - \nabla_{\nu}h(\nu)^{\tT} + \b_{\cC}h{^\tT}+ \tfrac{1}{2}d h_{\nu \nu} - (A + H\g)h(\nu)^{\tT} - (\<D^2 x^{\a}, h\>\p_{\a})^{\tT},$$
Write $\b_{\cC}h^{\tT} = \d_{\cC}(\ring{h} + \frac{\tr_{\cC}h^{\tT}}{n}g_{\cC}) + \frac{1}{2}d\tr_{\cC}h^{\tT} = \d_{\cC}\ring{h} + \frac{n-2}{2n}d u$. 
Substituting this in the equation above gives \eqref{gauget}. Similar computation gives \eqref{gaugen}. Note that \eqref{gaugen} may be rewritten in 
the form 
\be \label{gaugen2}
(V'_h)(\nu) = -\tfrac{1}{2}\nu(h_{\nu \nu}) + H'_h - \tfrac{1}{2}h_{\nu \nu}H + \<A, h^{\tT}\> - \<D^2 x^{\a}, h\>g_{\nu \a}. 
\ee 

\medskip 
 
The terms $V'_h$ represent (to leading order) Neumann boundary data for $h(\nu, \cdot)$, together with given target data. Note that in these equations, 
there is no simple equation for the variation of the conformal factor 
$$u = \tr_{\cC}h^{\tT}.$$
In fact the next result shows that the linearization of the Hamiltonian constraint \eqref{Gauss} gives such an equation. 

\begin{lemma}\label{u}
The function $u$ satisfies the wave equation 
\be \label{uC}
-\tfrac{n-1}{n}\Box_\cC u-\tfrac{1}{n}R_\cC u = Y(\cT, \d^* V'_h) + E(h),
\ee
along the boundary $\cC$, where 
\be \label{uinhom}
\begin{split}
Y(\cT, \d^*V'_h)&=-R'_{\ring{h}} +\tr_g (F - \d^*V'_h) - 2 (F - \d^*V'_h)(\nu,\nu),\\
E(h)&= -2\<A'_h, A\> + 2H_{\cC}H'_h + 2\<A\circ A ,h\> - \tr_g (\d^*)'_h V + 2(\d^*)'_h V(\nu,\nu)\\
&\quad-4\Ric(\nu,\nu'_h) - \<\Ric, h\>.
\end{split}
\ee
Moreover, the initial data $u$ and $\p_t u$ for $u$ at the initial surface $\Si$ are determined by the initial and boundary data 
in $T(\cT^H)$. In particular, $\p_t u$ is determined at $\Si$ by the linearization of \eqref{velocity}. 

\end{lemma} 

\begin{proof} 
The linearization of the Gauss equation \eqref{Gauss} for the hypersurface $\cC$ gives:
\bes
2\<A'_h, A\>{ -2\<A\circ A, h\>}-2H_\cC H'_h + R'_{h^{\tT}}=R'_h-2\Ric'_h(\nu,\nu)-4\Ric(\nu,\nu'_h)
\ees
Note that 
\bes
R'_{h^{\tT}}=R'_{\tfrac{1}{n}ug_{\cC}}+R'_{\ring{h}}=-\Box_\cC u+\d_\cC\d_\cC(\tfrac{1}{n}ug_\cC)-\<\Ric_\cC,\tfrac{1}{n}ug_\cC\> + R'_{\ring{h}}=
-\tfrac{n-1}{n}\Box_\cC u-\tfrac{1}{n}R_\cC u+R'_{\ring{h}}
\ees
So we obtain:
\bes
-\tfrac{n-1}{n}\Box_\cC u-\tfrac{1}{n}R_\cC u=-R'_{\ring{h}}-2\<A'_h, A\>{ +2\<A\circ A,h\>}+2H_\cC H'_h+R'_h-2\Ric'_h(\nu,\nu)-4\Ric(\nu,\nu'_h). 
\ees
Also $R'_h=\tr \Ric'_h-\<h,\Ric\>$ and $\Ric'_h=F-\d^* V'_h-(\d^*)'_h V$. Thus
\bes\begin{split}
&-\tfrac{n-1}{n}\Box_\cC u-\tfrac{1}{n}R_\cC u\\
&=-R'_{\ring{h}} + \tr F -2F(\nu,\nu)\\
&\quad-2\<A'_h, A\> + 2H_{\cC}H'_h +2\<A\circ A,h\> - \tr [\d^*V'_h+(\d^*)'_h V] + 2[\d^*V'_h + (\d^*)'_h V](\nu,\nu) \\
&\quad - 4\Ric(\nu,\nu'_h) - \<h, \Ric\>,\\
\end{split}\ees
which proves the first statement. 

From the discussion in \S 2, cf.~Lemma \ref{corner}, note $u$ and $\p_t u$ at $\Si$ are determined by the target data 
$h_S, K'_h, \ring{h}$ in $T(\cT) \subset T(\cT^H)$. 

\end{proof}

\begin{lemma} \label{nuTlemma}
The component $h(\nu)^{\tT} = h_{\nu a} \p_a$, $a = 0, 2, \dots n$, satisfies the Neumann boundary condition 
\be \label{hnua}
\nabla_{\nu} h(\nu)^{\tT} = \hat Y(\cT) + \hat Z(u, h_{\nu \nu}) + \hat E(h)
\ee
at $\cC$, where 
\be \label{hnuaterms}
\begin{split}
&\hat Y(\cT) = \d_{\cC}\ring{h} - (V'_h)^{\tT},\\
&\hat Z(u, h_{\nu \nu}) = \tfrac{1}{2}d_{\cC}(h_{\nu\nu} + \tfrac{n-2}{n}u) \\ 
&\hat E(h) =  -\<(A + H\g)h(\nu)^{\tT}, \p_a\> - (\<D^2 x^a, h\>\p_{\a})^{\tT}.
\end{split}
\ee 
\end{lemma}

\begin{proof}
This follows easily from \eqref{gauget}. 

\end{proof}

\begin{lemma} \label{nunulemma}
The component $h_{\nu \nu}$, satisfies the Neumann boundary condition 
\be \label{hnunu}
\tfrac{1}{2}\nu (h_{\nu\nu})= \w Y(\cT) + \w E(h), 
\ee
at $\cC$, where 
\be \label{hnunuaterms}
\begin{split}
&\w Y(\cT) = H'_h - (V'_h)(\nu),\\
&\w E(h) =  -\tfrac{1}{2}H h_{\nu\nu} + \<A, h^{\tT}\> - \<D^2 x^{\a}, h\>g_{\nu \a}.
\end{split}
\ee 
\end{lemma}

\begin{proof}
This follows from \eqref{gaugen2} in the same way as above. 

\end{proof}

    For later reference, note that the $Y$ terms above depend only on the target data in $\cT^H$, with the exception of $\d^*V'_h(\nu, \nu)$ in \eqref{uinhom}. 
The $E$ terms all have coefficients of order $O(\l)$ or $O(\e)$ in the localization of \S 2.1, and all are of order zero in $h$ except for the gauge-dependent 
terms and the terms involving $A'_h$ in \eqref{uinhom}.

\begin{remark}\label{GCG}
{\rm As in the proof of Lemma \ref{uC}, by taking the linearization of the momentum constraint (Gauss-Codazzi equation) \eqref{GC} in place of the linearization of 
the Hamiltonian constraint, it is straightforward to show that the tangential vector field $h(\nu)^\intercal$ satisfies the following wave equation along $\cC$:  
\be\label{hnuC}
-\tfrac{1}{2}[\Box_\cC+\Ric_\cC]h(\nu)^\intercal=\hat Y(\cT)+\hat Z(h^\intercal) + \hat E(h),
\ee
along $\cC$, where 
\bes\begin{split}
&\hat Y(\cT, V'_h)=  { \tfrac{1}{2}dH'_h} + (F-\d^* V'_h)(\nu) , \ \ \hat Z(h^{\tT}) = \tfrac{1}{2}\nabla_\nu (\b_\cC h^{\tT}),\\
&\hat E(h)=
\b'_{h^\intercal}A - (\d^*)'_h V(\nu)+\Ric(\nu'_h)-\tfrac{1}{2}[A(\b_\cC h^{\tT})+\b'_{2A}h^{\tT}+\b_\cC\big(h_{\nu \nu}A\big)].
\end{split}
\ees
Moreover, the initial data $h(\nu)^\intercal$ and $\p_t h(\nu)^\intercal$ for $h(\nu)^\intercal$ at the initial surface $\Si$ are determined by 
initial, boundary and corner data in $T(\cT^H)$.

In fact, both the evolution equations \eqref{uC} and \eqref{hnuC} for $u$ and $h(\nu)^{\tT}$ can also be derived from the gauge equations for 
$V'_h$ at the boundary $\cC$. To see this, take the normal derivative $\nabla_{\nu}$ of \eqref{gauget} and use the facts that 
$$\nabla_{\nu}\nabla_{\nu} h(\nu)^{\tT} =  \Box_g h(\nu)^T - \Box_{\cC}h(\nu)^T + P'(\p h)$$ and 
$$-\tfrac{1}{2}\Box_g h(\nu)^{\tT} + P(\p h) = F(h(\nu)^{\tT}),$$
for $P, P'$ terms of the form in \eqref{L2}. Taking then the tangential derivative $d_{\cC}$ of \eqref{gaugen} and inserting this into the equations above 
then leads to the expression \eqref{hnuC}. Similarly, taking the divergence $\d_{\cC}$ of \eqref{gauget} and the normal derivative $\nu$ 
of \eqref{gaugen}, analogous computations give \eqref{uC}. 

  Thus imposing the Hamiltonian and momentum constraints along $\cC$ follows by imposing Dirichlet boundary data for the 
harmonic gauge. 
}
\end{remark}

  In later sections, we will repeatedly need to solve linear wave equations of the form 
\be \label{L0}
-\tfrac{1}{2}\Box_g v + p(\p v)  = \f,
\ee
for given $\f$. Here $v$ may be either scalar valued or vector-valued. Although solvability of the linear equation 
or system \eqref{L0} holds more generally, we will only need to actually solve such systems in $C^{\infty}$, so with $g$, $p$ and 
$\f$ in $C^{\infty}$. 

 The next Lemma is well-known and will be used repeatedly in \S 4. 

\begin{lemma} \label{DirSomm}
The $C^{\infty}$ system \eqref{L0} is solvable for Dirichlet boundary data, for any smooth $\f$, with standard energy estimates. 
\end{lemma}

\begin{proof} 

  The existence and uniqueness of solutions to \eqref{L0} in Sobolev spaces, with given initial data and with Dirichlet boundary data is standard, 
cf.~\cite{BS}, \cite{Sa}. We assume implicitly here that the initial data and boundary data for $v$ satisfy the requisite compatibility conditions at the 
corner $\Si$, cf.~\S 2.3. Energy estimates for such equations are also standard. Thus, the strong or boundary stable energy estimate with Dirichlet 
boundary data states that any (smooth) solution of \eqref{L0} satisfies the bound 
\be \label{DirE}
||v||_{\bar \cH^s(S_t)}^2 \leq C[||v||_{H^s(S_0)}^2 + ||\p_t v||_{H^{s-1}(S_0)}^2 + ||v||_{H^s(\cC_t)}^2 + ||\f||_{H^{s-1}(M_t)}^2],
\ee
where $C$ is a constant depending only on $g$, the coefficients of $p$ and an upper bound for $t$. Note the difference in the stronger and weaker 
norms on the left and right of \eqref{DirE}. We recall that $\cC_t = \{p \in \cC: t(p) \leq t\}$ and similarly for $M_t$. 

 \end{proof}

\begin{remark} \label{Neumann} 
{\rm One also has existence and uniqueness for smooth solutions of the equation \eqref{L0} with given Neumann boundary data $b(v) = \nu_{\cC}(v)$. 
However, in this case, there is no effective energy estimate as in \eqref{DirE}. Instead there is such an estimate but with a loss 
of derivative - or more precisely a loss of half-a-derivative, i.e.
\be \label{NeuE}
|v||_{\bar \cH^s(S_t)}^2 \leq C[||v||_{H^s(S_0)}^2 + ||\p_t v||_{H^{s-1}(S_0)}^2 + ||\nu_{\cC}(v)||_{H^{s-1/2}(\cC_t)}^2 + ||\f||_{H^{s-1}(M_t)}^2].
\ee
We refer to \cite{T} for further details. In some situations, the bound on $||\nu_{\cC}(v)||_{H^{s-1/2}(\cC_t)}$ can be improved to a stronger 
$H^s$ bound. However, it cannot be improved to an effective $H^s$ bound as in \eqref{DirE}. 
}
\end{remark}

 \section{Construction of linearized solutions.} 

In this section, we construct solutions $h$ to the linear problem $D\Phi_g^H(h) = \tau'$ by solving Dirichlet and Neumann 
initial boundary value problems for the boundary data determined by the equations \eqref{uC}, \eqref{hnua} and \eqref{hnunu} of \S 3. 

   As discussed in \S 1, let $\cD \subset T(\cT^H)$ be the subset of target data $\tau'$ for which the $C^m$ norms of $\tau'$ are uniformly bounded 
in $m$: thus $\cD = \cup_{K\in \bZ^+} \cD_K$, where $\cD_K$ is the space of $\tau'$ such that 
\be \label{Mbound}
||\tau'||_{C^m} \leq K,
\ee
for all $m \in \bZ^+ \cup \{0\}$.\footnote{All the results here hold for derivative growth as in Footnote 2 in place of the uniform bound \eqref{Mbound}.}

 Observe that $\cD$ is dense in $T(\cT^H)$ in the $C^{\infty}$ semi-norm topology on $\cT^H$. To see this, by decomposing the relevant tensors into their 
components, it suffices to show that the space of $C^{\infty}$ functions on a compact domain $D \subset \bR^{n+1}$ for which all $C^{m}$ norms 
are uniformly bounded by a constant as in \eqref{Mbound}, is a dense subspace of $C^{\infty}(D)$ with the semi-norm topology. However, this is a standard 
result, which follows easily from the Stone-Weierstrass theorem; all $\tau'$ with polynomial component functions are in $\cD$. 
The same notation and discussion holds for the background metric $g$. 
 
\subsection{Local linearized solutions.}

   Throughout this section, the local rescaled metric $\w g$ as in \S 2.1 will be denoted by $g$. Also, as in \S 3, the unit normal $\nu_{\cC}$ to $\cC$ is generally 
denoted simply by $\nu$ and is extended into bulk region $\w U$ as in \eqref{nuC}. Clearly, $\nu = \nu_{\cC}$ is close to its Minkowski corner value, 
given by \eqref{normals}. 

     The main result of this section is the following dense-range result. Let 
\be \label{tau'1}
\tau' = (F, (\g_S', \k', \nu_S', V_S'), (\s', \ell', V_{\cC}'), \a'), 
\ee
be a general element in $T_{\tau}(\cT^H)$, $\tau = \Phi^H(g)$. Define a Sobolev $H^s$ norm on the target space data in $T_{\tau}(\cT^H)$ as follows: 
for $\tau'$ as in \eqref{tau'1},  
\be \label{tnorm}
\begin{split}
||\tau'||_{H^s(\cT^H)} = [ ||F||_{H^s(\w U)} & + [ ||\s'||_{H^{s+1}(\cC)} + ||\ell'||_{H^{s-1}(\cC)} + ||V'_{\cC}||_{H^s(\cC)}] + ||\a'||_{H^s(\Si)} \\
& + [ ||\g'_S||_{H^{s+1}(S_0)} + ||\k'||_{H^{s}(S_0)} + ||\nu'||_{H^s(S_0)} + ||V'_S||_{H^s(S_0)} ].
\end{split}
\ee
 On the right in \eqref{tnorm}, the top line consists of bulk, boundary and angle terms, while the bottom line consists of initial data terms. Note the shift in the 
derivative index $s$ on the target compared with the domain $TMet(M)$. Of course $S_0$ and $\cC$ are the corresponding 
local domains in $\w U$. As above, the local rescaled quantities $\w g$, $\w h$ are denoted by $g$, $h$.

\begin{theorem}\label{locexist}
There exists $\e > 0$ such that, for any smooth metric $g \in \cD$ on $\w U$ which is $C^{\infty}$ $\e$-close to a standard Minkowski corner metric $g_{\a_0}$ 
as in \eqref{eps}, the linearization $D\Phi_g^H$ at $g$ satisfies: given any $C^{\infty}$ target data 
$\tau' \in \cD$ on $\w U$, there exists a $C^{\infty}$ variation $h \in T_g Met(\w U)$, such that 
\be \label{DwPhisol}
D\Phi_g^H(h) = \tau'.
\ee
Thus, the equation 
\be \label{LhF}
L(h) = F,
\ee
has a $C^{\infty}$ solution $h$ such that 
\be \label{IC1}
(g_S, K)'_h = (\g_S', \k'), \ \ (\nu_S)'_h = \nu_S', \ \ V'_h = V_S' \ \mbox{ on }S,
\ee
and
\be \label{BC1}
(([g_\cC]), H)'_h = (\s', \ell'), \ \ V'_h = V_\cC' \ \mbox{ on }\cC.
\ee
Further the constructed solution $h$ depends smoothly on the data $\tau'$. 
\end{theorem}
  
 \begin{proof} 

  The proof proceeds by a standard Picard-type iteration process. As in \S 3, we work with separate equations for each of the terms 
\be \label{comp}
h^{\tT}, \ h(\nu)^{\tT}, \ h_{\nu \nu},
\ee
comprising $h$. The equations for $u$, $h(\nu, \cdot)$ along the boundary $\cC$ are those described in \S 3.

\medskip

0. In this step, we solve all the relevant equations in \S 3 without all the coupling terms, i.e.~the $P$-terms in the bulk and the $E$-terms on $\cC$ are 
set to zero, and we use only the given target data in $T(\cT^H)$ (i.e.~$F$ and $Y$ terms) for bulk, initial and boundary data. This is done step-by-step 
for each of the terms in \eqref{comp}, 

  We begin with the construction of the gauge field. Thus let $V_0'$ be the solution to the modified gauge equation 
\be \label{modV'}
-\tfrac{1}{2}(\Box_g V'_0+ \Ric_g (V'_0)) + \tfrac{1}{2}\nabla_{V} V'_0 = \b F,
\ee
(so the `error' term $\b'_h \Ric + O_{2,1}(V,h)$ is dropped from \eqref{V'}) and with initial and boundary data given by the target data $V'_S$ and $V'_{\cC}$.  
By the Dirichlet energy estimate \eqref{DirE}, we have 
$$||V_0'||_{\bar \cH^s(S_t)} \leq C||\tau'||_{H^s(\cT^H)}.$$

  Next solve the modified evolution equation \eqref{uC} for $u_0$ along $\cC$, 
\be \label{Hlin2} 
-\tfrac{n-1}{n} \Box_\cC u _0 -\tfrac{1}{n}R_{\cC}u_0 = R'_{\s'} - \tr(F-\d^* V_0') + 2(F-\d^* V_0')(\nu,\nu),
\ee
so that $E$ is set to zero and $V'_h$ is set to $V_0'$. Recall from Lemma \ref{u} that the initial conditions $u_0 |_{\Si}$ and $\p_t u_0 |_{\Si}$ are determined 
by the target data in $T(\cT)$. Solving \eqref{Hlin2} on $\cC$ with this data gives the ($C^{\infty}$) Dirichlet boundary value for $u_0$ along 
$\cC$. Standard energy estimates then give 
$$||u_0||_{H^s(\cC)} \leq C||\tau'||_{H^s(\cT^H)}.$$
Since the Dirichlet data $\s'$ is given in $\tau'$, the Dirichlet boundary data for $h_0^{\tT} = \s' + \frac{u_0}{n}g_{\cC}$ is thus also given. 
With such Dirichlet boundary data, we then solve the bulk equation 
\be \label{hT0}
-\tfrac{1}{2}\Box_g h_0^{\tT}   = F^{\tT}, 
\ee
with the given initial conditions from $\tau'$ on $\w U$. By \eqref{DirE}, the solution $h_0^{\tT}$ satisfies the energy estimate 
\be \label{H0Test}
||h_0^{\tT}||_{\bar \cH^s(S_t)} \leq C||\tau'||_{H^s(\cT^H)}.
\ee

  Next define $(h_0)_{\nu\nu}$ to be the unique solution to 
$$-\tfrac{1}{2}\Box_g (h_0)_{\nu\nu} = F_{\nu \nu},$$
with the Neumann data $\nu((h_0)_{\nu \nu})$ given by \eqref{hnunu} with $\w Y(\cT) = \ell' - V_0'(\nu)$ and $\w E = 0$. 
From the Neumann estimate \eqref{NeuE} we obtain 
\be \label{0nunu}
||(h_0)_{\nu\nu}||_{\bar \cH^s(S_t)} \leq C||\tau'||_{H^{s+1}(\cT^H)}.
\ee

   Turning next to $h(\nu)^{\tT}$, define $h_0(\nu)^{\tT}$ in $\w U$ to be the unique solution to 
$$-\tfrac{1}{2}\Box_g h_0(\nu)^{\tT} = F(\nu)^{\tT},$$
with the Neumann data $\nabla_{\nu}h_0(\nu)^{\tT}$ given by \eqref{hnua}. Here we set $\hat Y(\cT)$ from 
the target data in $T(\cT)$ and $\hat Z$ from the data above, so in particular $V'_h = V_0'$, $u = u_0$ and $h_{\nu\nu} = (h_0)_{\nu\nu}$; in 
addition, set $\hat E = 0$. Again, from the Neumann estimate \eqref{NeuE} we obtain 
\be \label{0nua}
||h_0(\nu)^{\tT}||_{\bar \cH^s(S_t)} \leq C||\tau'||_{H^{s+2}(\cT^H)}.
\ee
Note the significant loss-of-derivative here and in \eqref{0nunu} compared with the previous estimates. 

 Summing up the solutions above gives a solution $h_0$ to 
\be \label{h0F}
-\tfrac{1}{2}\Box_g h_0 = F,
\ee
in $\w U$ with the prescribed initial and boundary conditions from $\tau'$. Further 
$$||h_0||_{\bar \cH^s(S_t)} \leq C||\tau'||_{H^{s+2}(\cT^H)}.$$

 The next stages are error, so $O(\e)$ adjustments to $h_0$, and so are denoted $E_1$, $E_2$, etc. In contrast to 
Step 0, we now set all the target data in the equations from \S 3 to zero while the $E$ terms in these equations become non-zero. 

1.  First determine the correction to the gauge by letting $V_1'$ be the solution to the equation
$$-\tfrac{1}{2}(\Box_g V'_1+ \Ric_g (V'_1)) + \tfrac{1}{2}\nabla_{V} V'_1 = \b'_{h_0} \Ric + O_{2,1}(V,h_0),$$
with zero initial and Dirichlet boundary data. As above, we then have the estimate 
$$||V_1'||_{\bar \cH^s(S_t)} \leq C\e ||h_0||_{H^s(\w U)} \leq C\e || \tau'||_{H^{s+2}(\cT^H)}.$$

  Next, solve the evolution equation \eqref{uC} for $u_1$ along $\cC$ with $Y(\cT, V'_h) = Y(0, V_1')$, $E = E(h_0)$ and zero initial data. This gives 
Dirichlet boundary data for $u_1$ on $\cC$ and since the coefficients of $E$ are $O(\e)$ in $C^{\infty}$ by \eqref{eps}, 
$$||u_1||_{H^s(\cC)} \leq C\e ||h_0||_{H^s(\cC)} \leq C\e ||\tau'||_{H^{s+2}(\cT^H)}.$$
We set $\ring{E_1} = 0$ on $\cC$ and so $E_1^{\tT} = u_1 g_{\cC}$. Using this Dirichlet boundary value, we then solve 
$$-\tfrac{1}{2}\Box_g E_1^{\tT} = -P^{\tT}(h_0),$$
in $\w U$ with zero initial data. The Dirichlet energy estimate then gives 
$$||E_1^{\tT}||_{\bar \cH^s(S_t)} \leq C \e ||\tau'||_{H^{s+2}(\cT^H)}.$$

Next let $(E_1)_{\nu \nu}$ on $\cC$ be the unique solution to the Neumann IBVP 
$$-\tfrac{1}{2}\Box_g (E_1)_{\nu \nu} = -P_{\nu \nu}(h_0)$$
in $\w U$, with zero initial data and Neumann boundary data as in \eqref{hnunu} with $\w Y(\cT) = 0$ and $\w E =  -\tfrac{1}{2}H (h_0)_{\nu\nu} + 
\<A, h_0^{\tT}\> - \<D^2 x^{\a}, h_0\>g_{\nu \a}$. Then  
$$||(E_1)_{\nu\nu}||_{\bar \cH^s(S_t)} \leq C \e ||\tau'||_{H^{s+3}(\cT^H)}.$$

  Next, for $E_1(\nu)^{\tT}$, solve the IBVP 
$$-\tfrac{1}{2}\Box_g E_1(\nu)^{\tT} = -P(\nu)^{\tT}(h_0),$$
on $\w U$ with zero initial data and with Neumann boundary data in \eqref{hnua} with $\hat Y(\cT, V'_h) = (0, V_1')$, $\hat Z = Z(u_1, (E_1)_{\nu \nu})$ 
and $\hat E = \hat E(h_0)$. As previously, we then have the bound  
$$||E_1(\nu)^{\tT}||_{\bar \cH^s(S_t)} \leq  C\e||\tau'||_{H^{s+4}(\cT^H)}.$$

  As in Step 0, summing these components together then gives $E_1$ solving 
$$-\tfrac{1}{2}\Box_g E_1  = -P(h_0),$$
in $\w U$. The term $E_1$ has zero target initial data and zero target boundary data and satisfies the estimate 
$$||E_1||_{\bar \cH^s(S_t)} \leq C\e ||\tau'||_{H^{s+4}(\cT^H)}.$$

2. Next, $E_2$ is constructed in the same way as $E_1$ above, with $E_1$ in place of $h_0$ and $E_2$ in place of 
$E_1$. Thus, first $V_2'$ solves 
$$-\tfrac{1}{2}(\Box_g V'_2+ \Ric_g (V'_2)) + \tfrac{1}{2}\nabla_{V} V'_2 = \b'_{E_1} \Ric + O_{2,1}(V,E_1),$$
and with zero initial and Dirichlet boundary data. As previously, we then have the estimate 
$$||V_2'||_{\bar \cH^s(S_t)} \leq C\e^2  ||\w \tau'||_{H^{s+4}(\cT^H)}.$$
Similarly, $E_2$ solves 
$$-\tfrac{1}{2}\Box_g E_2  = -P(E_1),$$
in $\w U$, with zero target  initial data and zero target boundary data. The boundary data for $E_2$ is constructed by 
setting $Y(\cT, V'_h) = (0, V'_2)$, $E = E(E_1)$ in \eqref{uinhom}, $\hat Y = 0, \hat E = \hat E(E_1)$ in \eqref{hnunu}, 
and $\w Y = 0, \w Z = \w Z(u_2, (E_2)_{\nu\nu}), \w E = \w E(E_1)$ in \eqref{hnua}. As before, we then obtain the bound 
$$||E_2||_{\bar \cH^s(S_t)} \leq C_2\e ||E_1||_{\bar \cH^{s+2}(S_t)} \leq C_2 C_1\e^2||\tau'||_{H^{s+6}(\cT^H)},$$
where the constants $C_i$ reflect the dependence of the estimates on $s$. 

   Continuing inductively in this way, it follows that if $\tau' \in \cB$, i.e.~$\tau' \in \cB_K$ for some $K$, then the sequence $\{E_m\}$, 
$m \geq 1$, satisfies the bound 
\be \label{Embound}
||E_m||_{\bar \cH^s(S_t)} \leq C_m\e ||E_{m-1}||_{\bar \cH^{s+2}(S_t)} \leq C_m C_{m-1} \e^2 ||E_{m-2}||_{\bar \cH^{s+4}(S_t)} \leq \cdots \leq 
(\prod_1^m C_i) \e^m ||\tau'||_{H^{s+2m+2}(\cT^H)}.
\ee
The constants $C_i$ derive from the energy estimates \eqref{DirE} and \eqref{NeuE}. These depend on the $C^j$ norm, $0 \leq j \leq s+2m$, of the 
background metric $g$, due to the commutator $[\p_{x^{\a}}, \Box_g] = O(\p^2 g)$ and its iterates. However, since by assumption 
$g \in \cD$, there is a fixed constant $C$, independent of $s$, such that $\prod_1^m C_i \leq C^m$, so that  
$$||E_m||_{\bar \cH^s(S_t)} \leq C^m \e^m K.$$
By choosing $\e$ sufficiently small, it follows that the sequence $h^k = h_0 + \sum_1^k E_i$ converges to a limit $h \in C^{\infty}$. Similarly, 
the gauge approximations $V^k = \sum_{j=0}^k V_j$ converge in $C^{\infty}$ to a limit gauge field $V$. 
By construction we have 
$$-\tfrac{1}{2}\Box_g(h_0 + \sum E_i)  = F - P(h_0 + \sum E_i),$$
so that 
\be \label{Lbarh}
L(h) = F.
\ee
Hence $h$ solves \eqref{LhF}. By construction, it is easy to see that $h$ satisfies \eqref{IC1} as well as $[g_{\cC}]'_{h} = \s'$ in \eqref{BC1}. 
We claim next that  
$$V' = V'_h.$$ 
To see this, note that by construction, both $V'$ and $V'_h$ have the same initial values on $S$ and $(V')^{\tT} = (V'_h)^{\tT}$ on $\cC$. Regarding the 
normal component, first observe that by construction, the limit $u = \lim \sum u^k$ satisfies \eqref{uC}, i.e. 
\be \label{Hlin2}
\begin{split} 
-\tfrac{n-1}{n} \Box_{\cC} u - \tfrac{1}{n}R_{\cC}u & =  R'_{\ring{h}} - \tr(F - \d^* V') + 2(F - \d^* V')(\nu,\nu)\\
 &+2\<A'_h, A\> - 2H_\cC H'_h - 2\<A\circ A, h\> + 4\Ric(\nu,\nu'_h) + \<\Ric, h\>\\
&+ \tr [(\d^*)'_h V] - 2[(\d^*)'_h V](\nu,\nu), \\
\end{split}
\ee
along $\cC$. Moreover, by \eqref{Lbarh}, $(\Ric + \d^*V)'_h = F$ and so applying the linearization of the Hamiltonian constraint as before shows 
that \eqref{Hlin2} also holds with $V'_h$ in place of $V'$. Hence  
$$\tr(\d^*V')-2\d^* V'(\nu,\nu) = \tr(\d^* V'_h) - 2\d^*  V'_h(\nu,\nu) \mbox{ on }\cC.$$
Since $(V')^{\tT} = (V'_h)^{\tT}$, this equation shows that 
$$\nu \<V', \nu\> = \nu\<V'_h, \nu\> \ \ {\rm at} \ \ \cC.$$
Since both $V'$ and $V'_h$ satisfy the same gauge equation \eqref{V'}, it then follows from uniqueness of solutions to the mixed Dirichelt-Neumann 
boundary value problem that $V' = V'_h$. From this and from \eqref{hnunu}, it follows that along $\cC$, 
$$H'_h = \ell'.$$ 
Finally, the angle variation satisfies $\a'_h = \a'$, since the angle variation is a term in $\p_t u$ on $\Si$ which is part of the construction of $h$. 

Also, the construction above  is clearly smooth in the data $T(\cT^H)$ and in $g$. This completes the proof of Theorem \ref{locexist}.  

\end{proof}

\begin{remark}\label{derivloss}

  The iteration process used in the construction of the solution $h$ loses derivatives at each stage, due to the use of the Neumann boundary data in 
Lemmas \ref{nuTlemma} - \ref{nunulemma}. Currently, it is unknown if Theorem \ref{locexist} can be improved to a full surjectivity of the map $D\Phi_g^H$ 
at, say,  generic $g \in C^{\infty}$; this typically requires the use of stronger apriori energy estimates (as used in \cite{I}, \cite{II}) which are not known to hold in the 
current setting.   
  
 \end{remark}

\subsection{Global linearized solutions} 

\begin{theorem}\label{globexist}
Let $(M, g)$, $g \in \cD$, be a smooth globally hyperbolic Lorentzian manifold, $M \simeq I \times S$, with compact Cauchy surface $S$ having compact 
boundary $\p S = \Si$ and with timelike boundary $(\cC, g_{\cC})$. Then for any smooth $\tau' \in \cD \subset T_{\tau}(\cT^H)$, the equation 
\be \label{wL3}
D\Phi_g^H(h) = \tau'
\ee
has a $C^{\infty}$ solution $h$. The constructed solution $h$ depends smoothly on the data $\tau'$. 
\end{theorem} 
 
 \begin{proof}

  Given the previous local results, the proof of Theorem \ref{globexist} is essentially standard. We provide the details below for completeness. 
    
  Given $(M, g)$ as in \S 1, first choose an open cover $\{U_i\}_{i=1}^N$ of the corner $\Si$, where each $U_i$ is small enough so that the local 
existence result Theorem \ref{locexist} holds for each $U_i$. Additionally, choose an open set $U_0$ in the interior $M\setminus \cC$ such that 
$(\cup_{i=1}^N U_i)\cup U_0$ also covers a tubular neighborhood of the initial surface, i.e. $S\times [0,t']$ for some time $t'>0$.

Let $\{\w U_i\}_{i=0}^N$ be a thickening of the cover $\{U_i\}_{i=0}^N$, so that $U_i \subset \w U_i$, as described in \S 2.1. Let 
$\rho_i$ be a partition of unity subordinate to the cover $\{\w U_i\}_{i=0}^N$, so that ${\rm supp}\rho_i\subset \w U_i$, $\rho_i = 1$ on 
$U_i$ and  $\sum_i \rho_i=1$. Let 
$$\tau' =\big(F, (\g', \k', \nu', V_S'), (\s', \ell',V_{\cC}'), a' \big),$$
be arbitrary $C^{\infty}$ data in $T(\cT^H)$ on $M$. Then the data $\rho_i \tau'$ has compact support in the sense of \S 2.1 in $\w U_i$ 
and by Theorem \ref{locexist} there exists a solution $h_i$ in $\w U_i$ satisfying 
$$D\Phi^H(h_i) = \rho_i \tau'.$$ 
Moreover, as noted at the end of \S 2.1, by the finite propagation speed property, the solution $h_i$ has compact support in the sense of \S 2.1 for 
$t \in [0,t_i]$ for some $t_i > 0$. Thus, $h_i$ extends smoothly as the zero solution on $M_{t_i} \setminus \w U_i$. 

   Similarly, in the interior region $U_0 \subset \w U_0$, let $h_0$ be the solution to the Cauchy problem 
$$L(h_0) = \rho_0 F \ \mbox{ in } \w U_0, \ \ (g_S, K_S, \nu_S, V_g)'_h = \rho_0(\g', \k', \nu', V_S') \ \mbox{ on }S_0.$$
Then again $h_0$ has compact support in $\w U_0$ for some time interval $[0, t_0]$ and as above extends to $M_{t_0}$. We relabel so that 
$t_0$ is a common time interval for all $h_i$, $i \geq 0$. 

  The sum 
$$h = \sum_{i=0}^N h_i,$$
is thus well-defined on $M_{t_0}$ and by linearity
$$D\Phi^H(h) = \tau'.$$ 
By the proof of Theorem \ref{locexist}, the constructed $h$ satisfies the estimate
\be \label{enest2}
||h||_{C^{m}(S_t)} \leq K,
\ee
for all $m$ and for $t \leq t_0$. Here $K$ is a constant $K = K(t_0, \tau')$, which depends only on the background metric $g$ and the 
target data $\tau' \in \cD$. 

  One may now continue the solution $h$ past the time $t_0$ to all $t < \infty$ in the usual way, assuming of course that the background metric 
$g$ satisfies this condition. Briefly, let $I_{t_0}$ denote the target initial data of $h$ at the Cauchy slice $\{t=t_0\}$ and form the target data 
$\tau'$ by replacing the initial data in $\tau'$ with $I_{t_0}$. Then applying Theorem \ref{locexist} with initial slice $S_{t_0}$ and using \eqref{enest2} 
gives the continuation of $h$ to $[0, t_1]$, with $t_1 > t_0 + \mu$, where $\mu$ depends only on the boundary data $b'_h$ on $\cC$. 
Assuming then $g$ is defined for all time $t$, since then $b'_h$ is also globally defined on $\cC$ for all $t$, the same holds for $h$. 
  
\end{proof}

\section{Uniqueness}

   In this section, we prove the uniqueness result stated in Theorem \ref{ThmI}. This holds for general smooth $\tau' \in \cT^H$, not necessarily 
$\tau' \in \cD$; it may be considered as a Holmgren-type uniqueness result in this setting.  Note that the usual proof of uniqueness of (linearized) solutions 
of an IBVP is via energy estimates. However, as noted above, such energy estimates are not available here and so a different method is needed. 
The basic idea is to use the self-adjointness property derived from an action or variational principle; this has previously been noted and used 
in the elliptic (Riemannian) case, cf.~\cite{A1}, \cite{AK}.

  For convenience, the uniqueness result is phrased in the global setting where $S$ is a compact Cauchy slice with boundary corner 
$\Si$ a compact (codimension two) surface; the same result holds locally, for compactly supported data as in \S 2.1, cf.~Remark \ref{localunique},

   Let $C^{\infty}_0$ be the space of $C^{\infty}$ deformations $h$ with zero initial, boundary, gauge and corner data, i.e. 
\be \label{zero}
\big((h_S,K'_h, (\nu_S)'_h, V'_h), (\ring{h}, H'_h, V'_h), \a'_h \big) = 0.
\ee
We set $S = S_0 = \{t = 0\}$. 

\begin{theorem}\label{unique}
Let $g$ be a vacuum Einstein metric, $\Ric_g = 0$ on $M$ as above. Suppose the linearized operator $L$ has dense range on $C^{\infty}_0$, i.e. 
for any $F$ within a dense subset $C^{\infty}$, the equation 
$$L(h) = F,$$
has a solution $h \in C_0^{\infty}$. {\rm (}Compare with Theorem \ref{globexist} {\rm )}. Then $L$ has trivial kernel on $C^{\infty}_0$, 
\be \label{KerL}
Ker L = 0.
\ee
\end{theorem}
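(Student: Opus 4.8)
The plan is to exploit the formal self-adjointness of $L$ together with the surjectivity hypothesis, in the spirit of the Riemannian argument of \cite{A1}, \cite{AK}. The starting point is a Green's-type identity: since $L$ is principally the tensor wave operator $-\tfrac12\Box_g$ (which is self-adjoint), and its lower-order part arises from the second variation of the Einstein--Hilbert action together with the harmonic gauge term $\d^*V$, there should be a boundary bilinear form $B$ with
\be \label{green}
\int_M \big(\<Lh, k\> - \<h, Lk\>\big)\, dV_g = B_{S_0}(h,k) + B_{S_T}(h,k) + B_{\cC}(h,k),
\ee
where $S_0 = \{t=0\}$, $S_T = \{t = T\}$, and each term is the integral of a bilinear expression in the Cauchy (resp.~boundary) $1$-jets of $h$ and $k$ over the indicated face of $\p M$. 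First I would derive \eqref{green} by integrating by parts twice: the bulk terms cancel by the symmetry of $D^*D$ and of the Hessian of the action, leaving only the boundary contributions.

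The decisive step is to show that $B_{\cC}$ vanishes on the zero-data space $C^\infty_0$. Here I would identify $B_{\cC}(h,k)$ with the symplectic pairing of the conformal-mean curvature Cauchy data along $\cC$. Using the conjugate-pair structure \eqref{conj}, this pairing takes the schematic form $\int_{\cC}\big(u_h\,H'_k - u_k\,H'_h\big) + \int_{\cC}\big(\<\ring{h}, \pi_k\> - \<\ring{k}, \pi_h\>\big)$ together with terms linear in the gauge data $V'_h|_\cC, V'_k|_\cC$, where $\pi$ denotes the trace-free boundary momentum conjugate to the conformal class $\ring{h}$. The key point is that each summand pairs one datum prescribed by $\cB_C$ ($\ring{h}$, $H'_h$, or $V'_h|_\cC$) against its conjugate; consequently, for $h, k \in C^\infty_0$ --- for which $\ring{h} = \ring{k} = 0$, $H'_h = H'_k = 0$ and $V'_h|_\cC = V'_k|_\cC = 0$ by \eqref{zero} --- every term in $B_\cC$ vanishes. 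Verifying that the contributions coming from $(\d^*V)'_h$ at $\cC$ are indeed of this conjugate form, and vanish once $V'|_\cC = 0$, is the part requiring the most care.

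It remains to dispose of the two Cauchy faces. On $S_0$ the deformation $h$ has vanishing full Cauchy $1$-jet by \eqref{zero} and Lemma \ref{hS}, so $B_{S_0}(h,k) = 0$ for every $k$; symmetrically, I would pair against test deformations $k$ vanishing to first order on the \emph{final} slice $S_T$. Precisely, let $C^\infty_{0,T}$ be the space of deformations with zero Cauchy data on $S_T$ and zero geometric and gauge boundary data on $\cC$. By the time-reversed form of Theorem \ref{exist} --- whose construction is time-symmetric, so the surjectivity statement applies equally to the backward Cauchy problem --- the operator $L$ maps $C^\infty_{0,T}$ onto all of $C^\infty$. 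For such $k$ we have $B_{S_T}(h,k) = 0$, and combined with $B_{S_0} = B_\cC = 0$, identity \eqref{green} collapses to $\int_M \<h, Lk\>\, dV_g = 0$ for all $k \in C^\infty_{0,T}$.

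Finally I would conclude by duality. Given $h \in \Ker L \cap C^\infty_0$ and an arbitrary $F \in C^\infty$, choose $k \in C^\infty_{0,T}$ with $Lk = F$; since $Lh = 0$, the previous paragraph gives $\int_M \<h, F\>\, dV_g = 0$. As $F$ ranges over all compactly supported smooth symmetric $2$-tensors and the pairing $\<\cdot,\cdot\>_g$ is pointwise non-degenerate (although indefinite), this forces $h \equiv 0$, which is \eqref{KerL}. The main obstacle is the second step: establishing that the boundary form $B_\cC$ is precisely the conjugate pairing of the conformal-mean curvature data, including a careful accounting of the harmonic-gauge boundary terms --- this is the Lorentzian counterpart of the self-adjointness computation carried out in the elliptic setting of \cite{A1}, \cite{AK}.
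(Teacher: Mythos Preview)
Your overall strategy---prove a Green's identity, pair a putative kernel element $h$ with zero data at $S_0$ against test deformations $k$ with zero data at the future slice $S_T$, and conclude by surjectivity of $L$ on the backward problem---is exactly the paper's. The gap is in your first step: the operator $L$ itself is \emph{not} formally self-adjoint, so the identity \eqref{green} does not hold as written. There are two obstructions. First, the bulk operator is $\Ric'_h + \d^*V'_h$, and the linearized Ricci tensor is not the Hessian of any action; the Euler--Lagrange operator of the Einstein--Hilbert action is the Einstein tensor $E_g$, and it is $E'_h$, not $\Ric'_h$, that inherits the symmetric second-variation structure. Second, the harmonic gauge field $V'_h = \b h - \<D^2 x^{\a}, h\>\p_{x^{\a}}$ contains the coordinate-dependent correction $\<D^2 x^{\a}, h\>\p_{x^{\a}}$, and the resulting term $-\d^*(\<D^2 x^{\a}, h\>\p_{x^{\a}})$ in $L$ is not symmetric: its formal adjoint is $k \mapsto -(\d k)_{\a}\,D^2 x^{\a}$, so $\int_M\<Lh,k\> - \<h,Lk\>$ carries a nonvanishing \emph{bulk} remainder whenever $D^2 x^{\a} \neq 0$, which is generic even on-shell. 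Your boundary bookkeeping on $\cC$ (conjugate pairing of $(\ring h, H')$ and $(u,\ring\pi)$, etc.) is the right picture for the variational operator, but it does not apply to $L$.

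The paper repairs this by changing gauge before invoking self-adjointness. It replaces $L$ by $\w L(h) = E'_h + \d^*\d h$ (Einstein tensor linearization with divergence-free gauge), for which both pieces are genuinely symmetric: $E'_h$ via the second variation of the Einstein--Hilbert action with the conformal boundary term \eqref{EH}, and $\d^*\d$ trivially. This yields the clean identity \eqref{selfadj} on the spaces $\w C^{\infty}_0, \w C^{\infty}_1$ (Proposition \ref{selfad}), with the $\cC$-boundary terms vanishing exactly as you anticipated. The price is two additional gauge-shift lemmas: surjectivity of $L$ on $C^{\infty}_0$ is transferred to surjectivity of $\w L$ on $\w C^{\infty}_0$ (Lemma \ref{gshift}), giving $\Ker\w L = 0$; then for $h \in \Ker L \cap C^{\infty}_0$ one shifts $h \mapsto h - \d^*X$ into $\w C^{\infty}_0$, concludes $h = \d^*X$, and finally shows $X = 0$ from $V'_{\d^*X} = 0$. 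Your argument becomes correct once you make this detour through $\w L$.
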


  The proof will be carried out in several steps.  We first pass to a different gauge, the divergence-free gauge, associated with the Einstein tensor 
$$E_g = \Ric_g - \frac{R_g}{2}g.$$ 
Note that the Bianchi identity gives $\d E = 0$, equivalent to the previously used $\b \Ric = 0$. The reason for passing from $\Ric$ to $E$ is that 
$E$ has a natural variational or action principle; namely it arises as the Euler-Lagrange operator for the variation of the Einstein-Hilbert action. 
Such variational operators have natural (formal) self-adjoint properties, after taking into account suitable boundary terms. This structure is not 
available for the bare Ricci tensor $\Ric$. 

   To begin, choose any fixed background vacuum metric $\w g$ near $g$ and consider the bulk gauged operator 
\be \label{newPhi}
\w \Phi^H(g) = E_g + \d^* \d_{\w g} g.
\ee
The linearization of $\w \Phi^H$ at the background vacuum metric $g = \w g$ in the bulk is given by  
\be \label{newlin}
\w L(h) = D \w \Phi^H(h) = \tfrac{1}{2}D^*Dh - Rm(h)  - \tfrac{1}{2}D^2 \tr \, h + \tfrac{1}{2}(\Box \tr \, h- \d \d h)  g,
\ee
compare with \eqref{L}. Note that the new $\w \Phi^H$ in \eqref{newPhi} depends on a choice of background metric $\w g$. This is not case with 
the original operator $\Phi^H$ in \eqref{PhiH}, which instead depends on a (fixed) choice of coordinates $x^{\a}$. The formula \eqref{newlin} changes by 
the addition of lower order terms when $g \neq \w g$, so we really have a family of operators $\w L_{\w g}$. 

  Let $T(\w \cT^H)$ be the linearized target data space $T(\cT^H)$, with the gauge $V'_h$ on $S$ and $\cC$ replaced  by the new gauge data $\d h$ on $S$ 
and $\cC$. Similarly, let $\w C^{\infty}_0$ be the associated space where this target data vanishes, so that \eqref{zero} holds with $\d h = 0$ 
in place of $V'_h = 0$. Although not needed for the proof, we note that Lemma \ref{Gauge-lemma2} holds as before: if $\Ric_g = 0$ and if 
$\w L(h)=F$ satisfies $\d_{\w g}F=0$ in $M$ with $F(\nu_S, \cdot) = 0$ on $S$, then $h\in \w C_0^\infty$ satisfies $\p_t \d h = 0$ on $S$, 
and hence $\d h = 0$ on $M$. The proof of this is the same as before. 

  Now the full boundary $\dm$ of the compact domain $M$ consists of the timelike cylinder $\cC$, the bottom and top Cauchy surfaces $S = S_0$, 
$S_1 = \{t = 1\}$ (the value $t = 1$ is for convenience) and the two corners $\Si = \Si_0$, $\Si_1$. We consider pairs of deformations $h$, $k$ with 
\be \label{01}
h \in \w C^{\infty}_0 \ \ {\rm and} \ \  k \in \w C^{\infty}_1,
\ee
where $\w C^{\infty}_1$ consists of all deformations $k$ for which the initial, gauge, corner (and boundary) data vanish on the top slice $S_1$ 
(in place of $S_0$ for $h$). 

\begin{proposition}\label{selfad}
For $h \in \w C^{\infty}_0$ and $k \in \w C^{\infty}_1$, we have the formal self-adjoint property
\be \label{selfadj}
\int_M \<\w L(h), k\> dv_g = \int_M \<h, \w L(k)\> dv_g,
\ee
where the pairing is with respect to the Lorentz metric $g$. 
\end{proposition}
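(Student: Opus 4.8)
The plan is to exploit the variational structure that motivated the passage to $E_g$ and the divergence gauge. Recall from \eqref{newlin} that $\w L$ is the linearization at the vacuum background $g=\w g$ of the operator $E_g+\d^*\d_{\w g}g$. Since $E_g$ is (up to sign) the Euler--Lagrange operator of the Einstein--Hilbert action $\cH(g)=\int_M R_g\,dv_g$, its linearization $DE_g$ is a formally self-adjoint second order operator; likewise $\d^*$ is the formal adjoint of the divergence $\d$, so the gauge term $\d^*\d$ is formally self-adjoint. Consequently Green's identity (double integration by parts) yields
\bes
\int_M \langle \w L(h),k\rangle\,dv_g-\int_M\langle h,\w L(k)\rangle\,dv_g=\int_{\dm} Q(h,k),
\ees
where $Q(h,k)$ is an explicit bilinear concomitant in the $1$-jets of $h$ and $k$ along the faces of $\dm=S_0\cup S_1\cup\cC$, together with possible lower-dimensional contributions at the corners $\Si_0,\Si_1$. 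The entire proof then reduces to showing that every face and corner contribution to $\int_{\dm}Q(h,k)$ vanishes under $h\in\w C^{\infty}_0$, $k\in\w C^{\infty}_1$.

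I would first dispose of the two spacelike faces. On $S_0$ the vanishing of the full initial and gauge data of $h$, namely $(h_S,K'_h,(\nu_S)'_h,\d h)=0$, forces, by the analogue of Lemma \ref{hS} in the divergence gauge, the complete Cauchy data $h|_{S_0}=\partial_t h|_{S_0}=0$; hence every monomial of $Q$ restricted to $S_0$ carries a vanishing $h$-factor and the $S_0$-contribution vanishes. Symmetrically, $k\in\w C^{\infty}_1$ has vanishing Cauchy data on the top slice $S_1$, so the $S_1$-contribution vanishes. Thus $h$ annihilates $S_0$ and $k$ annihilates $S_1$.

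The crux is the timelike face $\cC$, where neither $h$ nor $k$ has vanishing Cauchy data: only the conformal Dirichlet datum $\ring h$, the mean-curvature datum $H'_h$, and the gauge datum $\d h$ vanish (and likewise for $k$). The key point, and the reason for working with $E_g$ and the divergence gauge, is that the boundary concomitant $Q|_\cC$ produced by the second variation of $\cH$ together with its natural (Gibbons--Hawking) boundary term organizes precisely into pairings of the canonically conjugate data $(u,H'_h)$ of \eqref{conj} and of the gauge data. Concretely I would expand $Q|_\cC$ using the tangential/normal splitting of $h$ along $\cC$ as in \eqref{4}--\eqref{5} and the conformal decomposition $h^{\tT}=\ring h+\tfrac1n u\,g_\cC$, and then integrate by parts tangentially along $\cC$. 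Each surviving term is a product in which one factor is a conformal Dirichlet datum ($\ring h$ or $\ring k$), a mean-curvature datum ($H'_h$ or $H'_k$), or a gauge datum ($\d h$ or $\d k$); since all of these vanish for both $h$ and $k$, the pointwise integrand $Q|_\cC$ vanishes, up to a tangential divergence whose integral contributes only at the corners.

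Finally I would account for the corner terms at $\Si_0$ and $\Si_1$ arising from the tangential integrations by parts along $\cC$ (and along $S_0,S_1$); these vanish because the corner data $\a'_h$ at $\Si_0$ and $\a'_k$ at $\Si_1$, together with the $C^0$ and $C^1$ compatibility conditions of \S 2.2, force the relevant jets of $h$ and $k$ to vanish there. Collecting these steps gives $\int_{\dm}Q(h,k)=0$ and hence \eqref{selfadj}. I expect the main obstacle to be the third step: the explicit evaluation of $Q$ on the timelike face $\cC$ and the verification that the conformal--mean-curvature boundary data and the divergence gauge \emph{together} annihilate it. This is exactly the assertion that $([\g_\cC],H_\cC)$ is a self-adjoint conjugate boundary pair for the gauged Einstein operator, and confirming it requires careful bookkeeping of the curvature and second-fundamental-form terms in the concomitant rather than any single slick identity.
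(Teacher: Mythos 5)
Your overall strategy --- bulk formal self-adjointness of $\w L$ plus vanishing of a boundary concomitant --- is the right circle of ideas, but as written the proposal has a genuine gap exactly where you locate the ``main obstacle'': the vanishing of $Q(h,k)$ on the timelike face $\cC$ is asserted, not proven, and that assertion \emph{is} the proposition. The difficulty is not mere bookkeeping. If you integrate \eqref{newlin} by parts naively, the concomitant on $\cC$ contains antisymmetrized pairings such as $\tr_g k\,\nu(\tr_g h) - \tr_g h\,\nu(\tr_g k)$ coming from the $\tfrac12(\Box \tr h)g$ and $-\tfrac12 D^2\tr h$ terms; since $\tr_g h = n u_h + h_{\nu\nu}$ on $\cC$, these are products of quantities (the conformal factor $u$, the component $h_{\nu\nu}$, and their normal derivatives) \emph{none} of which vanish for $h \in \w C^{\infty}_0$, $k \in \w C^{\infty}_1$ --- the data conditions on $\cC$ only kill $\ring{h}$, $H'_h$ and $\d h$ (and likewise for $k$). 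Whether these surviving terms cancel against one another, i.e.~whether $([g_\cC], H_\cC)$ together with the divergence gauge really is a self-adjoint boundary pairing, is precisely the content of \eqref{selfadj}, so a proof cannot stop at the claim that the concomitant ``organizes into'' pairings of vanishing data. Your treatment of $S_0$, $S_1$ and the corners is fine and agrees with the paper, but those are the easy faces.

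The paper closes this gap without ever computing the concomitant, and the device is worth internalizing: it introduces the action $S(g) = \int_M R_g\,dv_g + \tfrac{2}{3}\int_\cC H_\cC\,dv_\g$, whose boundary coefficient $\tfrac{2}{3}$ (not the Gibbons--Hawking--York value) is chosen so that the first variation has $\cC$-term $\int_\cC[\<\ring{\pi},\ring{h}\> + \tfrac{4}{3}H'_h]$, i.e.~pairings against exactly the prescribed data. Writing out the second variation $D^2S_g(h,k)$ and using the symmetry of mixed second derivatives then does all the work: on $\w C^{\infty}_0 \times \w C^{\infty}_1$ the $\cC$-term of \eqref{2nd} reduces to the manifestly symmetric term $\tfrac{4}{3}(H'_h)'_k$; the antisymmetric part of the $S_0$, $S_1$ terms is the pre-symplectic form $\O(h,k)$, which involves only Cauchy data and so vanishes at $S_0$ (where $h$'s data vanish) and at $S_1$ (where $k$'s do); and the corner terms vanish for the same reason. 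On-shell this yields $\int_M \<(E_g)'_k, h\> = \int_M \<(E_g)'_h, k\>$, and the gauge term $\d^*\d$ is handled separately by two integrations by parts using $\d h = 0$ on $S_0\cup\cC$ and $\d k = 0$ on $S_1\cup\cC$. If you want to complete your route instead, you must either reproduce this cancellation by hand or show that your $Q|_\cC$ differs from the antisymmetrization of the paper's $\cC$-boundary term by a tangential divergence; either way, the step you deferred is the heart of the proof, not a technicality.
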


\begin{proof} 

Consider the Lagrangian in 4-dimensions given by, cf.~\cite{A2}, \cite{AA2}, 
\be \label{EH}
S(g) = \int_M R_g dv_g + \tfrac{2}{n}\int_{\cC}H_{\cC} dv_{\g}.
\ee
This is a slight modification of the Einstein-Hilbert action with Gibbons-Hawking-York boundary term (in units where $16\pi G = 1$). The first variation 
is given by 
\bes\begin{split}
D_g S(h) =& -\int_M \<E_g, h\>  + \int_{\cC}[\<\ring{\pi}, \ring{h}\> + \tfrac{4}{n}H'_h] + \int_{S_1}\theta(h) - \int_{S_0}\theta(h)\\
 &{ +\int_{\Si_1}h(\nu,T)-\int_{\Si_0}h(\nu,T)}
 \end{split}\ees
Here $\pi = A_{\cC} - H_{\cC}g_{\cC}$ is the conjugate momentum and $\ring{\pi}$ is its trace-free part. The term $\theta$ is the (pre)-symplectic potential 
given by $\theta(h) = -\star(d tr_g h - div_g h)$; see e.g.~\cite{HW}, \cite{AA2}. Here and below we drop 
the notation for the various volume forms. 

The $2^{\rm nd}$ variation is then given by 
\be \label{2nd}
\begin{split}
 D^2 S_g(h, k) = & -\int_M [\<(E_g)'_k, h\> - 2\<E_g \circ h, k\> + \tfrac{1}{2}\<E_g, h\>\tr k ] \\
 &+ \int_{\cC}[ \<(\ring{\pi})'_k, \ring{h}\> - 2\<\ring{\pi} \circ k, \ring{h}\> + \tfrac{4}{n}(H'_h)'_k + \tfrac{1}{2}[\<\ring{\pi}, \ring{h}\> + \tfrac{4}{n}H'_h]\tr_{\cC}k  \\
&+ \int_{S_1}(\theta_h)'_k + \tfrac{1}{2}\theta(h)\tr_{S_1}k -  \int_{S_0}(\theta_h)'_k + \tfrac{1}{2}\theta(h)\tr_{S_0}k\\
 &{ +(\int_{\Si_1}-\int_{\Si_0})h(\nu'_k,T) + h(\nu, T'_k) + \tfrac{1}{2}h(\nu,T)\tr_\Si k}.
\end{split}
\ee

    Throughout the following discussion, we work with $h \in \w C^{\infty}_0$, $k \in \w C^{\infty}_1$ as in \eqref{01}. The corner integrals vanish 
since $h,k$ have zero initial data. The boundary term over $\cC$ above vanishes on $\w C^{\infty}_0$ and $\w C^{\infty}_1$ - except for the term 
$(H'_h)'_k$. Note however that this term is symmetric in $h$ and $k$.  

  At the top and bottom slices, the difference $D^2 S_g(h, k) - D^2 S(k, h)$ of the two terms is the (pre)-symplectic form $\O(h, k)$ evaluated at the 
top and bottom Cauchy slices, cf.~\cite{HW}. Simple computation, cf.~\cite{AA2} for example, shows this is of the form 
$$\O_S(h,k) = -\int_S \<K'_h, k\> - \<K'_k, h\> + \tfrac{1}{2}[\tr_S h(\<K_S, k\> + 2(\tr K_S)'_k) - \tr_S k(\<K_S, h\> + 2(\tr K_S)'_h)]. $$
This expression involves only the initial data of $h$ and $k$ at $S$. Since the full initial data of 
$h$ vanishes at $S_0$, (since $h \in \w C^{\infty}_0$), 
$$\O_{S_0}(h, k) = 0.$$
Similarly since the full initial data of $k$ vanishes at $S_1$, (since $k \in \w C^{\infty}_1$)
$$\O_{S_1}(h, k) = 0.$$

  Thus on-shell where $E_g = 0$, by the symmetry of the $2^{\rm nd}$ derivative operator, it follows from \eqref{2nd} that for all 
  $h \in \w C^{\infty}_0$, $k \in \w C^{\infty}_1$, 
$$\int_M \<(E_g)'_k, h\> = \int_M \<(E_g)'_h, k\>.$$
Note this conclusion does not involve any choice of gauge. Since $\w L(h) = E'_h + \d^* \d h$ and since for $h \in \w C^{\infty}_0$, 
and $k \in \w C^{\infty}_1$, 
$$\int_M \<\d^* \d k, h\> = \int_M \<\d^* \d h, k\>,$$
it follows that 
$$\int_M \<\w L(k), h\> = \int_M \<\w L(h), k\>.$$

\end{proof}

  At this point, we need the following. 
\begin{lemma}\label{gshift}
For $\Ric_g = E_g = 0$, suppose the operator $L = L_g: C^{\infty}_0 \to T(\cT^H)$ has dense range. Then the operator $\w L: \w C^{\infty}_0 
\to T(\cT^H)$ (with $\w g = g$) also has dense range, i.e.~for any $G \in \cD$, there exists $\w h \in \w C^{\infty}_0$ such that 
$$\w L(\w h) = G.$$
Consequently, 
\be \label{KwL}
Ker \w L = 0,
\ee
on $\w C^{\infty}_0$. 
\end{lemma}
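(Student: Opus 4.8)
The plan is to deduce surjectivity of $\w L$ on $\w C^\infty_0$ from the surjectivity of $L$ on $C^\infty_0$ by reducing both gauge-fixed operators to the underlying geometric linearized Einstein operator, and then to extract $\Ker \w L = 0$ by combining surjectivity with the formal self-adjointness of Proposition \ref{selfad}. The starting observation is that, on shell where $\Ric_g = E_g = 0$, both $L$ and $\w L$ are gauge reductions of the same geometric operator. For $h \in C^\infty_0$ solving $L(h) = F$ with $\b_g F = 0$, the gauge-propagation argument of Remark \ref{goodg} (via Lemma \ref{Gauge-lemma2}) forces $V'_h \equiv 0$, so that $\Ric'_h = F$ genuinely, with all geometric data vanishing. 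Since $E'_h = \Ric'_h - \tfrac12(\tr \Ric'_h)g$ on shell, and $\d_g E'_h = \b_g \Ric'_h$, the trace-reversal map $F \mapsto E'_h$ is a bijection carrying $\{F : \b_g F = 0\}$ onto the space of divergence-free symmetric $2$-tensors. Thus the hypothesis already yields: for every divergence-free $\w G$, the genuine equation $E'_h = \w G$ admits a $C^\infty$ solution $h$ with vanishing geometric data.

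Given this, I would solve $\w L(\w h) = G$ for arbitrary $G \in C^\infty$ in three steps. First, solve the vector wave equation $\d_g \d_g^* W = \d_g G$ on $M$ for the gauge field $W := \d_g \w h$, with Dirichlet data $W|_{S \cup \cC} = 0$ and initial velocity determined by the target data; this well-posed problem isolates the pure-gauge part of $G$. Second, the corrected source $G - \d_g^* W$ is divergence-free (since $\d_g \d_g^* W = \d_g G$), so by the reduction above there is a genuine solution $h$ of $E'_h = G - \d_g^* W$ with vanishing geometric data. Third, convert $h$ into the divergence-free gauge: solve $\d_g \d_g^* X = W - \d_g h$ for a vector field $X$ vanishing to first order along $S$ and to zeroth order along $\cC$, and set $\w h = h + \d_g^* X$. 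Because $E'_{\d_g^* X} = \tfrac12 \cL_X E_g = 0$ on shell, one has $E'_{\w h} = E'_h$ and $\d_g \w h = W$, whence $\w L(\w h) = E'_{\w h} + \d_g^* \d_g \w h = (G - \d_g^* W) + \d_g^* W = G$. Since $X$ leaves the geometric data of $\w h$ equal to that of $h$, namely zero, and arranges $\d_g \w h|_{S \cup \cC} = 0$, we obtain $\w h \in \w C^\infty_0$, proving surjectivity.

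For the consequence $\Ker \w L = 0$, I would first note that reversing time ($t \mapsto 1 - t$, interchanging $S_0$ and $S_1$) turns surjectivity on $\w C^\infty_0$ into surjectivity on $\w C^\infty_1$. Now suppose $h \in \w C^\infty_0$ with $\w L(h) = 0$. For any $G \in C^\infty$, choose $k \in \w C^\infty_1$ with $\w L(k) = G$; the self-adjoint pairing of Proposition \ref{selfad} then gives $\int_M \<h, G\> = \int_M \<h, \w L(k)\> = \int_M \<\w L(h), k\> = 0$. As $G$ is arbitrary, $h = 0$, which is precisely \eqref{KwL}.

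The main obstacle is the third step. Producing the gauge field $X$ requires solving a wave equation whose boundary behavior is overdetermined in appearance: $\d_g^* X$ must have vanishing initial, boundary and corner (geometric) data, which pins down $X$ and $\nabla X$ along $S$ and $X$ along $\cC$, while simultaneously $\d_g \w h = \d_g h + \d_g\d_g^* X$ must vanish on $S \cup \cC$. Reconciling these conditions, together with their compatibility at the corner $\Si$, is exactly where the corner analysis of \S 2.2 (cf.~Lemma \ref{corner}) enters, and it also relies on the divergence-gauge analog of Lemma \ref{Gauge-lemma2} asserted in the text. Should this matching prove delicate, an alternative is to bypass the transfer and rerun the iteration in the proof of Theorem \ref{exist} verbatim, with $V'_h$ replaced throughout by $\d_g h$ and the gauge equation \eqref{V'} by its divergence-gauge counterpart; the $\e$-small coupling structure is identical, so the same construction yields surjectivity of $\w L$ directly.
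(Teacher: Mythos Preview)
Your proof is correct and follows essentially the same route as the paper: decompose $G$ into a divergence-free part plus $\d^* W$ (the paper writes $G = G_0 + \d^* Y$), use surjectivity of $L$ together with gauge propagation to solve the genuine equation $E'_h = G_0$, then shift by $\d^* X$ with $X$ vanishing to first order on $S$ and to zeroth order on $\cC$ to land in $\w C^\infty_0$. The kernel argument via Proposition~\ref{selfad} and (implicit) time reversal is also the same.

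The ``main obstacle'' you flag in the last paragraph is a phantom. The conditions on $X$ are not overdetermined: the data $X = \p_t X = 0$ on $S$ and $X = 0$ on $\cC$ are exactly the well-posed Cauchy--Dirichlet data for the hyperbolic system $\d\d^* X = W - \d h$, and they \emph{automatically} force all geometric data of the pure-gauge variation $\d^* X$ to vanish (this is precisely what membership in $T(\Diff_1(M))$ means). The remaining requirement $\d\w h|_{S\cup\cC} = 0$ is then a consequence of the equation itself, since $\d\w h = W$ throughout $M$ and $W$ was constructed with $W|_{S\cup\cC} = 0$. The paper handles this step in two lines without the worry; your proposed fallback of rerunning the iteration of Theorem~\ref{exist} is unnecessary.
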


\begin{proof} 
  The first statement is just a shift in gauge, and so is not particularly surprising. To give the details, observe that 
any $G\in C^\infty$ can be decomposed as 
\bes
G=G_0+\d^* Y
\ees
where $\d G_0=0$ and $Y$ is the vector field such that $\d\d^* Y=\d G$ in $M$ with vanishing initial and boundary data. Then to solve 
$\w L(\w h)=G$, it suffices to solve for $\w h\in\w C^\infty_0$ such that 
\be\label{E''}
E'_{\w h}=G_0, \ \d \w h=Y
\ee
By hypothesis, the equation $L(h) = F$ is solvable for any $F \in \cD \subset C^{\infty}$, with $h \in C^{\infty}_0$. In particular, choose $F=G_0-\frac{1}{2}\tr G_0 g$. 
It follows from $\d G_0=0$ that $\b F=0$ and hence by Lemma \ref{Gauge-lemma2}, $V'_h = 0$ in $M$. In turn, this implies that $Ric'_h = F$ in 
$M$. Since we are working on-shell where $\Ric_g = 0$, 
$$E'_h = \Ric'_h - \tfrac{1}{2}\tr \Ric'_h \, g = F - \tfrac{1}{2}\tr F \, g=G_0.$$ 

Next we transform or shift $h$ by a diffeomorphism $\d^*X$, i.e.~set $\w h = h + \d^*X$, where $X = 0$ on $\cC$, $X = \p_t X = 0$ on 
$S$, so that $\d \w h = Y$. Then $\w h$ solves \eqref{E''}, since $E'_{\d^*X} = 0$ on-shell. Since $\w h$ is now in $\w C^{\infty}_0$, this 
completes the proof of the first statement. 

  To prove \eqref{KwL}, suppose $\w L(\w h) = 0$, with $\w h \in \w C^{\infty}_0$. Then by Proposition \ref{selfad}, 
$$\int_M \<\w L(k), \w h\> = 0,$$
for all $k \in \w C^{\infty}_1$. By the above, the equation $\w L(k) = G$ is solvable for any $G$ in the dense subset $\cD \subset C^{\infty}$ with 
$k \in \w C^{\infty}_1$. Since the Lorentzian pairing in \eqref{selfad} is non-degenerate and continuous, it follows that $\w h = 0$, so that $Ker \w L = 0$. 

\end{proof}

  We now complete the proof of Theorem \ref{unique}. 
  
\begin{proof}

  Naturally, the proof is essentially the reverse of Lemma \ref{gshift}. Thus, to prove \eqref{KerL}, suppose $L(h) = 0$ with $h \in C^{\infty}_0$. Without 
loss of generality, we may assume $V_g = 0$ and so by Lemma \ref{Gauge-lemma2}, $V'_h = 0$ and $\Ric'_h = 0$. Choose the vector field $X$ with 
$X = \p_t X = 0$ on $S$, $X = 0$ on $\cC$ such that $\w h = h - \d^*X$ satisfies $\w h \in \w C^{\infty}_0$, i.e.~$\d \w h = 0$. Then Lemma \ref{gshift} 
above implies that $\w h = 0$, so $h = \d^*X$. It follows that 
$$V'_{\d^*X} = 0$$
on $M$. Thus, 
\be \label{Xg}
\b \d^*X - \<D^2 x^{\a}, \d^*X\>\p_{x^{\a}} = 0 \ \ {\rm on} \ \ M.
\ee
The second term is first order in $X$ so \eqref{Xg} is a hyperbolic system of wave equations (as in Lemma \ref{Gauge-lemma2}). Since $X = 0$ to 
first order on $S$ and $X = 0$ on $\cC$, it follows that $X = 0$ on $M$. Hence $h = 0$ which proves the result. This completes the proof of 
Theorem \ref{unique}.  

\end{proof}

\begin{remark}\label{localunique}
{\rm Theorem \ref{unique} is phrased globally, but the same proof holds for the localization to domains $U \subset \w U$ close to a standard Minkowski 
corner $g_{\a_0}$ as discussed in \S 2.1. Thus, uniqueness also holds for the local problem, provided the target data have compact support 
in $\w U$ away from $S\cap U$ and $\cC \cap U$. 
}
\end{remark}

\begin{remark}\label{Dirich}
{\rm We note that Theorem \ref{unique} also holds for Dirichlet boundary data $\cB_{Dir}$, as well as other boundary data arising from a Lagrangian, 
in place of the conformal-mean curvature boundary data $\cB_C$. The proof is the same, using the usual Gibbons-Hawking-York boundary action. 
}
\end{remark}

\section{Completion of Proofs.}

  In this section we complete the proofs of the main results in \S 1. 

\medskip 

 First, the proof of Theorem \ref{ThmI} follows directly from Theorem \ref{globexist} and Theorem \ref{unique}. . 
We turn then to the proof of Theorem \ref{ThmII}. 

\begin{proof}  

    In passing from $\Phi^H$ to $\hat \Phi$ in \eqref{Phi2}, we are dropping the gauge term $V = V_g$ on $S \cup \cC$. By construction, $\hat \Phi$ maps 
into $\cZ$ and so ${\rm Im}D\hat \Phi \subset T(\cZ) \subset T(\hat \cT)$. Recall that $T(\cZ)$ is the subspace of $T(\hat \cT)$ for which the 
linearization of the constraint equations \eqref{Gauss}-\eqref{GC} holds on $S$. In the following, we work on-shell, so that $\Ric_g = 0$ and $V = 0$. 

First it is easy to see that 
$${\rm Im}D\hat \Phi \cap T(\cO_{\Diff_1(M)}) = 0.$$
Namely, for any $h$, $D\hat \Phi(h) = (\Ric'_h, (h_S, K'_h, \nu'_h)_S, (\ring{h}, H'_h)_{\cC}, (\a'_h)_{\Si}) \in T(\hat \cT)$, The group 
$\Diff_1(M)$ acts naturally on the target $\hat \cT$ and a general element of the tangent space to the orbit $T(\cO_{\Diff_1(M)})$ has the form 
$(\d^*Z, 0, \dots, 0)$. An element of the intersection of these two spaces thus satisfies $\Ric'_h = \d^*Z$, for some $h$ and some 
$Z \in T(\Diff_1(M))$. Applying Bianchi operator gives $\b \d^*Z = 0$ and since $Z \in T(\Diff_1(M))$, $Z = 0$ and so $D\hat \Phi(h) = 0$. 

   Thus the main issue is the dense range property in \eqref{transv}. This derives from the dense range of the map $D\Phi^H$ in Theorem \ref{ThmI}.  
 Recall that $T(\cT^H)$ consists of arbitrary data 
$$\tau'= (F,(\g',\k',\nu',V'_S), (\s',\ell', V'_\cC), \a'):= (F,(\iota',\nu',V'_S), (b', V'_\cC), \a').$$
Here for simplicity, we denote the initial data $(\g',\k')_S$ as $\iota'$ and the boundary data $(\s',\ell')_\cC$ as $b'$. We embed $T(\hat \cT)$ into 
$T(\cT^H)$ by setting $V'_S=V'_\cC = 0$, and thus $T(\hat \cT)$ is the subspace of $T(\cT^H)$ given by 
$$T(\hat\cT) = \{ (F,(\iota',\nu',0), (b', 0), \a' \}.$$
The same proof as in Theorem \ref{locexist} shows that $D\Phi^H$ has dense range in $T(\cZ) \subset T(\hat \cT)$, ($D\Phi^H$ maps onto $T(\cZ) \cap \cD$). 
The constraint equations impose a constraint or coupling of the data $(F,(\iota',\nu')$, as discussed at the end of \S 2.2. The dense range property 
above then implies that for a dense set of $\tau'_0 \in T(\cZ) \subset T(\hat \cT)$, there exists a deformation $h \in T_g(Met(M))$, such that 
$D \Phi_g^H (h)= \tau'_0$, i.e.
$$\big(\Ric'_h + \d^*V'_h, (h_S,K'_h, \nu'_h, V'_h), (\ring{h}, H'_h, V'_h), \a'_h\big) = (F,(\iota',\nu',0), (b', 0), \a') = \tau'_0.$$
Also by construction, the data $(\Ric'_h + \d^*V'_h, (h_S, K'_h, \nu'_h))$ satisfies the linearized constraint equations \eqref{Gauss}-\eqref{GC} on $S$. 
As in the proof of Lemma \ref{Gauge-lemma2},  it then follows that $\p_t V'_h = 0$ on $S$. Thus, $V'_h$ vanishes to first order on $S$ and to zero order 
on $\cC$, i.e.~$V'_h \in T(\Diff_1(M))$. Therefore,
\bes
\tau'_0 = \big(\Ric'_h, (h_S,K'_h, \nu'_h,0), (\ring{h}, H'_h, 0), \a'_h\big)+(\d^*V'_h,0,..,0) = D\hat\Phi(h)+(\d^*V'_h,0,..,0)
\ees
This proves the dense range property. 

   The proofs that $D\hat \Phi$ and $D\Phi$ descend to the relevant quotients are then straightforward. 

 \end{proof}

 Next we prove Corollary \ref{ThmIII}. 
 
\begin{proof}
This follows by noting that $T\bE$ is the inverse image $T\bE = (D\Phi)^{-1}(0, \iota', b', \a') = (D\hat \Phi)^{-1}(0, \iota', \nu', b', \a')$ 
and that the kernel of $D\Phi$ is given by the gauge group terms $\d^*X$, $X \in T(\Diff_0(M))$ by Theorem \ref{ThmII}.

\end{proof}

\begin{remark}
{\rm We note that all of the results of this work also hold for the 1-parameter family of boundary conditions considered in \cite{LSW}. These 
are of the form $([g_{\cC}], (dv_{\cC})^p H_{\cC})$, where $dv_{\cC}$ is the volume form density along $\cC$. These boundary conditions 
interpolate between the data $([g_{\cC}], H_{\cC})$ and Dirichlet boundary data $g_{\cC}$ as $p \to 0$ and $p \to \infty$ respectively. 
The proofs remain the same, with only minor modifications. \footnote{We thank Edgar Shaghoulian for discussions on this issue.}

}
\end{remark}

\bibliographystyle{plain}

\end{document}